\theoremstyle{plain}
\newtheorem{thm}{Theorem}[section]
\newtheorem{prop}[thm]{Proposition}
\newtheorem{lem}[thm]{Lemma}
\theoremstyle{definition}
\newtheorem{rem}[thm]{Remark}
\newtheorem{defn}[thm]{Definition}
\def\Ker{\mathop{\mathrm{Ker}}\nolimits}
\def\Hom{\mathop{\mathrm{Hom}}\nolimits}
\newcommand{\col}{{\rm Col}}
\newcommand{\lra}{\longrightarrow}
\newcommand{\ra}{\rightarrow}
\newcommand{\Z}{{\Bbb Z}}
\newcommand{\pc}[2]{\mbox{$\begin{array}{c}
\includegraphics[scale=#2]{#1.eps}
\end{array}$}}
\begin{document}
\large
\begin{center}
{\bf\Large Twisted cohomology pairings of knots III; triple cup products}
\end{center}
\vskip 1.5pc
\begin{center}{\Large Takefumi Nosaka}\end{center}\vskip 1pc\begin{abstract}\baselineskip=12pt \noindent
Given a representation of a link group,
we introduce a trilinear form, as a topological invariant. %s as a link invariant of s.
We show that, if the link is either hyperbolic or a knot with malnormality, then
the trilinear form equals the pairing of the (twisted) triple cup product and the fundamental relative 3-class.
Further, we give some examples of the computation.
\end{abstract}

\begin{center}
\normalsize
\baselineskip=11pt
{\bf Keywords} \\
\ \ \ Cup product, Bilinear form, knot, twisted Alexander polynomial,
%Blanchfield pairing,
group homology, quandle \ \ \
\end{center}

\baselineskip=13pt

\tableofcontents

\large
\baselineskip=16pt
\section{Introduction}
This paper examines topological invariants of trilinear forms, while the previous papers \cite{Nos5} in this series discussed of bilinear forms.
In general, bilinear form arising from Poincar\'{e} duality is a powerful method, % for investigating
%manifolds,
as in algebraic surgery theory and classification theorems of some manifolds.
%, $\mathbb{L}$-theory and so on.
In contrast, there are not so many studies of trilinear forms.
However, some 3-forms and trilinear cup products appear in 3-dimensional geometry
together with topological information (see, e.g., \cite{CGO,Mark,L,MS,Tur}).
%, the trilinear forms have .
For example, we mention interesting observations from the Chern-Simons invariant (or $\phi^3$-theory) of the form
$$ \frac{k }{2\pi } \int_{M} \mathrm{tr}( A \wedge d A +\frac{2}{3}A \wedge A \wedge A) . $$

We give the definition of trilinear pairing (see \eqref{kiso}) in a general situation where the coefficients are arbitrary.
% can be briefly summarized as follows; see \eqref{kiso}:
Let $Y$ be a compact 3-manifold with toroidal boundary with orientation 3-class
$[Y, \partial Y] \in H_3 ( Y ,\partial Y ;\Z ) \cong \Z.$
%$
%which are generated by the fundamental 3-class
Choose a group homomorphism $\pi_1(Y) \ra G $,
a right $G$-module $M$, and a $G$-invariant trilinear function $ \psi : M^3 \ra A $ over a ring $A$.
Then, we can define the composite map
\begin{equation}\label{kiso}
H^1( Y,\partial Y ; M )^{ \otimes 3} \xrightarrow{\ \ \smile \ \ } H^{3}( Y ,\partial Y ; M^{\otimes 3} )
\xrightarrow{ \ \ \langle \bullet , \ [Y, \partial Y] \rangle \ \ } M^{\otimes 3 } \xrightarrow{\ \  \psi, \ \ }A
. \end{equation}
Here $M$ is regarded as the local coefficient of $Y$ via $f$, and
the first map $\smile$ is the cup product, and the second (resp. third) is defined by the pairing with $[Y, \partial Y]$ (resp. $\psi$).
In contrast to this definition, the 3-form \eqref{kiso} is considered to be something uncomputable. Actually, it seems hard to concretely deal with the 3-class $[Y, \partial Y] $ and the cup products.

This paper addresses the link case
where $Y$ is the 3-manifold which is obtained from the 3-sphere by removing an open tubular neighborhood of a link $L$, i.e., $Y=S^3 \setminus \nu L.$
In fact, if $L$ is a hyperbolic link, we obtain a diagrammatic method of computing the trilinear pairings.
To be precise, in Section \ref{sss3}, starting from a link diagram, we define invariants of trilinear forms, and show (Theorem \ref{mainthm3}) that the invariant is equal to \eqref{kiso}, if $L$ is a hyperbolic link. In addition, we also show a similar theorem in the torus case (see Theorem \ref{mainthm4}).
The point in the theorem is that, in the computation, we do not need describing $[Y, \partial Y] $ and cup products; thus, this computation is not so hard. In fact, we give some examples; see \S \ref{Lb162r3}.
% invariant$\mathrm{Sign}_w(K,\chi) - \mathrm{Sign}_1(K,\chi)$
% It has been a longstanding problem to provide such a computation of the signature.
% In this paper, as a result from (III) and \cite{Neu}, we succeed in giving a diagrammatic algorithm to practically compute the difference $\mathrm{Sign}_w(K,\chi) - \mathrm{Sign}_1(K,\chi)$; see \S \ref{AAp1} for the detail.
% Since known computations were done in rare conditions (see \cite{Gil} for knots of genus one), the result is expected to be use of the study of the concordance group or slice genus.
In addition, as an application (Theorem \ref{4i1}), when $Y$ is a 3-fold covering space of $S^3$ branched along a hyperbolic link $L$ and $M$ is a trivial coefficient, we give a diagrammatic computation of the trilinear pairing \eqref{kiso}.

This paper is organized as follows. Section 2 formulates the trilinear forms in terms of the quandle cocycle invariants,
and states the main theorems. Section 3 discusses a relation to 3-fold branched coverings. Section 4 describes some computations.
Section 5 gives the proofs of the theorems.
%introduces bilinear forms on twisted Alexander dual modules.
%, and Section 5 discusses a relation to classical knot invariant, including the Blanchfield pairing of a knot and Kirk-Livingston 2-form.
%and Section 7 completes proofs of other statements. Appendix A observes an interpretation of \eqref{kiso} from the work of Goldmann \cite{G}.
%and Appendix B formulates and discusses a modified Casson-Gordon signature.

\

\noindent
{\bf Notation.} Every link $L$ is smoothly embedded in the 3-sphere $S^3$ with orientation.
We write $E_L$ for the 3-manifold which is obtained from $S^3$ by removing an open neighborhood of $L$.
%Further, we denote by $ \pi_L $ the fundamental group $\pi_1(E_L)$,
%and denote by $\# L$ the number of the link component, i.e., $\# L =|\pi_0( \partial E_L)|.$ Furthermore, by $A$ we mean an abelian group.
% commutative ring with involution $\bar{} : A \ra A$ and with identity $1$.

\section{Results; diagrammatic formulations of the trilinear forms}\label{ss1}
Our purpose in this section is to give a link invariant of trilinear form (Theorem \ref{alspw23}), and to state the main results in \S \ref{sss3}.
For this purpose, \S \ref{sss2} starts by reviewing colorings,
and formulates some link-invariants of linear forms.

Thorough this section, we fix a group $G$ and a right $G$-module $M$ over a ring $A$.
%interpret Lefshcetz fibrations from quandle theory.

\subsection{Preliminary; the formulations of the first cohomology}\label{sss2}%\label{s3s3w}

We need some notation from \cite{IIJO,Nos5} before proceeding.
%In this paper, we need not read the previous paper \cite{Nos5}.
Denote $ M \times G$ by $X$.
%Let $Z$ be a subset of $G$ closed under the conjugation operation.
Further, define a binary operation on $X$ by
\begin{equation}\label{kihon} \lhd: (M \times G) \times (M \times G)\lra M \times G, \ \ \ \ \ \ (a,g,b,h) \longmapsto (\ (a-b)\cdot h +b, \ h^{-1}gh \ ), \end{equation}
which was first introduced in \cite[Lemma 2.2]{IIJO}, and satisfies ``the quandle axiom".
Furthermore, we choose a link $L \subset S^3$ with a group homomorphism $f:\pi_1(S^3\setminus L) \ra G$.

Next, we review colorings.
Choose an oriented diagram $D$ of $L.$
Then, it follows from % the conjugacy operation of $G$ and
the Wirtinger presentation of $D$ that the homomorphism $f$ is regarded as a map $ \{ \mbox{arcs of $D$} \} \to G$.
% that the set $ \mathrm{Col}_X(D) $ is bijective to the set of group homomorphisms
Furthermore, a map $\mathcal{C}: \{ \mbox{arcs of $D$} \} \to X$ is an $X$-{\it coloring} if
it satisfies %let us define an $X$-{\it coloring over $f$} to be such that
$\mathcal{C}(\alpha_{\tau}) \lhd \mathcal{C}(\beta_{\tau}) = \mathcal{C}(\gamma_{\tau})$ at each crossings of $D$ illustrated as
%$ \mathcal{C}(\gamma_k)= \mathcal{C}(\gamma_i) \lhd \mathcal{C}(\gamma_j)$
Figure \ref{fig.color}.
It is worth noticing that the set of all colorings is regarded as a subset of the direct product $X^{\alpha_D }$, where $\alpha_D$ is the number of arcs of $D$.
Let $\mathrm{Col}_X(D_f) $ denote the set of all $X$-colorings over $f$, that is,
\begin{equation}\label{kihon2294} \mathrm{Col}_X(D_{f}):= \{ \ \mathcal{C} \in (M\times G)^{\alpha_D } \ | \ \mathcal{C} \ \textrm{is an }X\textrm{-coloring}, \ \ p_G \circ \mathcal{C} =f \ \}, \end{equation}
where $p_G$ is the projection $ X = M \times G \ra G$.
% $ \longleftrightarrow \mathrm{Hom}_{\rm gr}(\pi_L , G ). \end{equation}
%Furthermore, the image of .
%As is well-known ,
%The set ${\rm Col}_X(D)$ is known to be bijective to $\Hom_{\rm Qnd}(Q_K,X) $; see \cite[\S ??]{Joy}.
%the set of quandle homomorphisms Q_K \ra X$.Hence, the subset $\mathrm{Col}_X(D_{f})$ %is a subset of $M^m$,
Then, we can easily verify from the linear operation \eqref{kihon} that $\mathrm{Col}_X(D_{f})$ is made into an abelian subgroup of $M^{\alpha (D)}$, % according to ,
and that the diagonal subset $M_{\rm diag} \subset M^{\alpha _D } $ is %a subset of $\mathrm{Col}_X(D_{f}) $ and
a direct summand in $\mathrm{Col}_X(D_{f}) $.
Denoting another summand by $ \mathrm{Col}^{\rm red}_X(D_{f}) $, we have a decomposition
$ \mathrm{Col}_X(D_{f}) \cong \mathrm{Col}^{\rm red}_X(D_{f}) \oplus M_{\rm diag} . $

The previous paper \cite{Nos5} gave a topological meaning of the coloring sets as follows:
\begin{thm}[\cite{Nos5}]\label{mai1}Let $E_L$ be a link complement in $S^3$ as in \S 1.
%the 3-sphere obtained by removing an open tubular neighborhood of the link $L$.
Regard the $G$-module $M$ as a local system of $E_L$ via %the homomorphism
$f: \pi_1(E_L) \ra G$. Then, there are isomorphisms
\begin{equation}\label{g21gg33} \mathrm{Col}_{X } (D_{f}) \cong H^1(E_L , \ \partial E_L ;M ) \oplus M, \ \ \ \ \ \ \ \ \ \ \ \mathrm{Col}_{X }^{\rm red} (D_{f}) \cong H^1(E_L , \ \partial E_L ;M ) . \end{equation}
\end{thm}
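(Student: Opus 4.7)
My plan is to match the $X$-coloring set with the twisted $1$-cocycles on a CW model of $E_L$ built from the Wirtinger presentation of $D$, and then to compare with the relative cohomology of the pair $(E_L,\partial E_L)$.

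I would first fix a CW structure on a spine of $E_L$ determined by $D$: one $0$-cell, one $1$-cell per arc (a meridional loop), and one $2$-cell per crossing attached along the Wirtinger relator encoded by \eqref{kihon}. The cellular cochain complex with coefficients in the right $G$-module $M$ via $f$ has groups $M$, $M^{\alpha_D}$, $M^{\tau_D}$ in degrees $0,1,2$. A routine Fox-derivative calculation at each crossing $\tau$ shows that the $\tau$-entry of $\delta^1(m_\alpha)_\alpha$ reduces (up to a choice of sign convention) to
\[
(m_\alpha - m_\beta) f(\beta) + m_\beta - m_\gamma,
\]
which vanishes precisely when the coloring axiom derived from \eqref{kihon} holds at $\tau$. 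Thus $\ker \delta^1 = \mathrm{Col}_X(D_f)$ as submodules of $M^{\alpha_D}$.

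Next I would enlarge the above spine to a genuine CW presentation of the pair $(E_L, \partial E_L)$ by adjoining, for each boundary torus, a standard meridian--longitude decomposition. In the resulting twisted relative cochain complex, $\delta^1$ on the arc coordinates is unchanged, but $\delta^0_{\mathrm{rel}}$ differs from the absolute Wirtinger one. I would verify that, after restriction to the arc $1$-cells, the image of $\delta^0_{\mathrm{rel}}$ in $M^{\alpha_D}$ is precisely the diagonal submodule $M_{\mathrm{diag}}$. This yields $\mathrm{Col}^{\rm red}_X(D_f) := \mathrm{Col}_X(D_f)/M_{\mathrm{diag}} \cong H^1(E_L,\partial E_L;M)$, and the first isomorphism of \eqref{g21gg33} then follows from the canonical splitting $\mathrm{Col}_X(D_f) = \mathrm{Col}^{\rm red}_X(D_f) \oplus M_{\mathrm{diag}}$ noted just before the theorem.

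I expect the last step to be the main obstacle: in the absolute Wirtinger complex $\delta^0(m) = (m(1-f(x_\alpha)))_\alpha$, which is \emph{not} diagonal, so the diagonal only emerges after genuinely incorporating the boundary tori into the CW pair and carefully tracking how the extra $0$- and $1$-cells modify $\delta^0_{\mathrm{rel}}$. A consistency check with trivial coefficients $M=\Z$ and an $n$-component link reassuringly gives $\mathrm{Col}_X(D_f)=\Z^n$ on the left and $H^1(E_L,\partial E_L;\Z)\oplus \Z = \Z^{n-1}\oplus \Z = \Z^n$ on the right, matching the claim.
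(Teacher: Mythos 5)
Your overall strategy---identify $\mathrm{Col}_X(D_f)$ with twisted $1$-cocycles of a complex built from the Wirtinger presentation, then match the diagonal $M_{\rm diag}$ with the coboundaries---is the same one underlying the cited result (the paper quotes Theorem \ref{mai1} from [N2]; the mechanism is recorded here as Theorem \ref{290} and Lemma \ref{clAl1}). But your first step contains a genuine error. For the Wirtinger relator $\gamma=\beta^{-1}\alpha\beta$ at a crossing, the cellular (Fox-derivative) cocycle condition on a $1$-cochain $\phi$ with $\phi(\alpha)=m_\alpha$ is
\[
m_\gamma \;=\; m_\alpha\, f(\beta)\;+\;m_\beta\bigl(1-f(\gamma)\bigr),\qquad f(\gamma)=f(\beta)^{-1}f(\alpha)f(\beta),
\]
whereas the coloring axiom from \eqref{kihon} reads $m_\gamma=m_\alpha f(\beta)+m_\beta(1-f(\beta))$. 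These differ by $m_\beta\bigl(f(\beta)-f(\gamma)\bigr)$, which is nonzero whenever $f(\alpha)$ and $f(\beta)$ do not commute; no choice of sign or relator convention removes the occurrence of $f(\alpha)$ in the coefficient of $m_\beta$, since it comes from $\partial r/\partial\beta$. Hence $\ker\delta^1$ and $\mathrm{Col}_X(D_f)$ are \emph{not} equal as submodules of $M^{\alpha_D}$. The two conditions are intertwined only after the change of variables $\kappa:(a,g)\mapsto(a-a\cdot g,\,g)$ of Theorem \ref{290}, i.e.\ the cocycle attached to a coloring sends the meridian $\alpha$ to $a_\alpha\bigl(1-f(\alpha)\bigr)$, not to $a_\alpha$; this substitution is the actual content of [N2, Theorem 2.2] and is missing from your argument.

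Even after inserting $\kappa$, your order of operations (compute the absolute kernel on the spine first, bring in the boundary only to modify $\delta^0$) does not work, because $\kappa$ is not injective on $M$ whenever $1-f(\alpha)$ has a kernel. What $\mathrm{Col}_X(D_f)$ actually matches is the \emph{relative} cocycle group $Z^1(\pi_L,\partial\pi_L;M)$ of Lemma \ref{clAl1}: a relative cocycle carries, in addition to the crossed homomorphism $\widetilde{f}$, one element $y_j\in M$ per boundary component satisfying $\widetilde{f}(h_j)=(y_j-y_j\cdot h_j,\,f(h_j))$, and it is precisely these $y_j$ that recover the $M$-coordinate lost under $\kappa$. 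So the boundary must enter already at the level of cocycles, not merely through $\delta^0_{\rm rel}$; correspondingly $M_{\rm diag}$ is identified with $B^1(\pi_L,\partial\pi_L;M)=\{(\widetilde{f}_a,a,\dots,a)\}_{a\in M}$ rather than with the image of a modified $\delta^0$ inside $M^{\alpha_D}$. Your consistency check with trivial coefficients is exactly the degenerate case $1-f(\alpha)=0$ in which both discrepancies vanish, so it does not test either issue; a coefficient module on which some meridian acts nontrivially but with $1-f(\alpha)$ non-invertible would expose the gap.
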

Furthermore, let us review shadow colorings \cite{CKS,IIJO}.
A {\it shadow coloring} is a pair of a coloring $\mathcal{C} $ over $f$ and a map $\lambda $ from the complementary regions of $D$ to $M$,
satisfying the condition depicted in the right side of Figure \ref{fig.color} for every arcs.
% such that,
% if two regions $R $ and $R ' $ are separated by an arc $\delta$ as shown in the right of Figure \ref{fig.color}, the equality $\lambda(R')= (\lambda(R)-b) \cdot h +b $.
% holds and $\lambda(\{ \infty \}) =0$.
Let $\mathrm{SCol}_{X}(D_f )$ denote the set of shadow colorings of $D$ such that
the unbounded exterior region is assigned by $0 \in M$.
%Given an $X$-coloring $\mathcal{C} $, we put $x_0 \in X$ on
Notice that, by the coloring rules, assignments of the other regions are uniquely determined from the unbounded region, and admit, therefore, a shadow coloring; % $\sh $ denoted by $(\mathcal{C}; x_0)$.
we thus obtain a bijection
\begin{equation}\label{kihos12} \col_X (D_f) \simeq \mathrm{SCol}_{X } (D_f). % sending $(\mathcal{C},x_0)$ to $\sh= (\mathcal{C}; x_0)$.
\end{equation}\vskip -1.419937pc
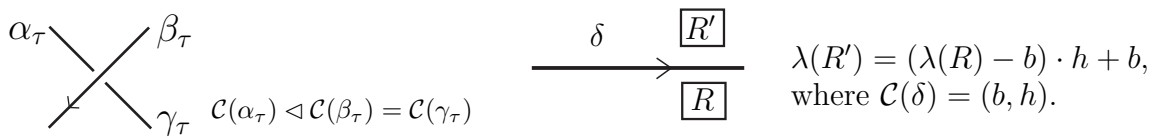
\begin{figure}[htpb]
\begin{center}
\begin{picture}(100,70)
\put(-142,46){\Large $\alpha_{\tau} $}
\put(-86,45){\Large $\beta_{\tau} $}
\put(-86,12){\Large $\gamma_{\tau} $}
\put(-132,27){\pc{kouten22}{0.2530174}}

\put(51,32){\pc{shadow.col.2}{0.286}}
\put(-65,16){$\mathcal{C}(\alpha_{\tau}) \lhd \mathcal{C}(\beta_{\tau}) = \mathcal{C}(\gamma_{\tau})$}

\put(78,43){\large $ \delta$}
\put(116,19){\large $R $}
\put(114,46){\large $ R ' $}
\put(154,35){\large $ \lambda(R')= (\lambda(R)-b) \cdot h +b $, }
\put(154,21){\large where $ \mathcal{C} (\delta )= (b,h) .$}
%\put(154,35){\normalsize $ \lambda(R')= (\lambda(R)-b) \cdot h +b $, }
%\put(154,21){\normalsize where $ \mathcal{C} (\delta )= (b,h) .$}
\end{picture}
\end{center}
\vskip -1.9937pc
\caption{The coloring conditions at each crossing $\tau$ and around each arcs. % In this paper, we describe orientations of links as normal orientations.
\label{fig.color}}
\end{figure}

\subsection{Invariants of trilinear forms}\label{sss3}

In addition, we will explain Definition \ref{deals23} below, and show Theorem \ref{alspw23}.

For this, we need two things:
first, we take three $G $-modules $M_1 ,\ M_2,\ M_3 $ and the associated $X_i = M_i \times G. $ Let $A$ be an abelian group.
%where the involution $\bar{} : A \ra A$ is assumed to be the identity.
On the other hand, we prepare a trilinear map
$\psi : M_1 \times M_2 \times M_3 \ra A$ over $\Z$ satisfying
the $G$-invariance, that is,
\begin{equation}\label{skew4}\psi(a_1 \cdot g ,a_2 \cdot g ,a_3 \cdot g ) =\psi(a_1,a_2,a_3 ) ,\end{equation}
holds for any $a_i \in M_i$ and $g \in G$.

Next, let us consider the map $ X_1 \times X_2 \times X_3 \ra A $ by the formula
\begin{equation}\label{skew} \bigl( (b_1,g_1),(b_2,g_2),( b_3,g_3) \bigr)\longmapsto \psi \bigl( ( b_1-b_2) \cdot(1-g_2 ),\ b_2 -b_3 , \ b_3 - b_3 \cdot {g_3}^{-1} \bigr), \end{equation}
for $a_i \in M_i$ and $g_1,g_2,g_3 \in G $. This map was first defined in \cite[Corollary 4.6]{Nos2}.
Furthermore, given three shadow colorings $ \mathcal{S}_i \in \mathrm{SCol}_{X_i }(D_{f})$ with $i \leq 3$ and each crossing $\tau$ of $D$, we can find assignments as illustrated in Figure \ref{kout1144}.
Inspired by the formula \eqref{skew}, we define a weight of $\tau$ to be
$$ \mathcal{W}_{\psi, \tau }( \mathcal{S}_1, \mathcal{S}_2, \mathcal{S}_3 ):= \psi \bigl( (a_1-b_1)(1-g^{ \epsilon_{\tau}}),b_2-c_2 , c_3 -c_3 \cdot h^{-1} \bigr) \in A,$$
where $ \epsilon_{\tau} \in \{ \pm 1 \}$ is the sign of $\tau.$

\

\begin{figure}[htpb]
\ \ \ \ \ \ \ \ \ \ \ \ \ \ \ \ \ \ \ \ \ \ \ \ \ \ \ \ \ \ \ \ \begin{picture}(100,26)
\put(-89,5){\large \fbox{$a_1$}}
\put(59,5){\large \fbox{$a_2$}}
\put(199,5){\large \fbox{$a_3$}}
\put(-99,25){\large $( b_1,g)$}
\put(-22,25){\large $( c_1,h)$}
\put(51,25){\large $( b_2,g)$}
\put(131,25){\large $( c_2,h) $}
\put(191,25){\large $( b_3,g)$}
\put(271,25){\large $( c_3,h) \in M \times G $}

\put(-76,3){\pc{kouten2}{0.2722792447304}}
\put(76,3){\pc{kouten2}{0.272266682447304}}
\put(216,3){\pc{kouten2}{0.272266682447304}}

\end{picture}
\vskip 1.1785pc

\caption{\label{kout1144} Colors around a crossing with respect to three shadow colorings. }
\end{figure}
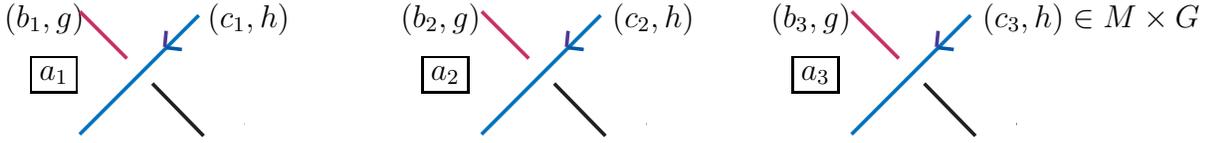

%Furthermore, %in the case where the $Y=X$ is the primitive $X$-set,
%This map $\psi_h: Y \times X^2 \ra A $ is also a 3-cocycle in the ?.
\begin{defn}\label{deals23}
%Fix $z_0 \in Z $, and denote $(0, z_0 ) \in X_i = M_i \times Z$ by $x_i$.
%Assume that the involution $\bar{} : A \ra A$ is the identity.
Given a $G$-invariant trilinear map $\psi : M_1 \times M_2 \times M_3 \ra A$,
we define a trilinear map
$$ \mathcal{T}_{\psi } : \prod_{i=1}^3 \mathrm{SCol}_{X_i }(D_{f}) \lra A; \ \ \ \ \ \ \ \ \
%$ by the formula
%\begin{equation}\label{skewtri}
( \mathcal{S}_1, \mathcal{S}_2, \mathcal{S}_3 ) \longmapsto \sum_{\tau} \mathcal{W}_{\psi, \tau }( \mathcal{S}_1, \mathcal{S}_2, \mathcal{S}_3 ), $$
%\end{equation}
where $\tau $ runs over all the crossings of $D$.
% and the symbols $\alpha_{\tau} , \ \beta_{\tau} $ and $R_{\tau}$ are according to Figure \ref{koutenpn}.
\end{defn}

The point is that, given a diagram $D$,
we can diagrammatically deal with the trilinear $ \mathcal{T}_{\psi } $ by definitions;
see \S \ref{Lexa1}--\S \ref{secex3} for examples. % of such computations.

Next, we now show the invariance of $\mathcal{T}_{\psi }$ up to trilinear equivalence:
\begin{thm}\label{alspw23}
Let two diagrams $D$ and $D'$ differ by a Reidemeister move.
There is a canonical isomorphism %Then the restriction
$\mathcal{B}_{i}: \mathrm{SCol}_{X_i}(D_{f}) \simeq \mathrm{SCol}_{X_i}(D_f') $,
for which the equality $ \mathcal{T}_{\psi }= \mathcal{T}_{\psi }' \circ (\mathcal{B}_1 \otimes \mathcal{B}_2 \otimes \mathcal{B}_3) $ holds as a map.

In particular, the equivalence class of the trilinear map $ \mathcal{T}_{\psi }$
depends on only the homomorphism $f:\pi_1(S^3 \setminus L) \ra G $ and the input data $( M_1,M_2, M_3, \psi)$.
\end{thm}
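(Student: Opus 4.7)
The plan is to verify the invariance by a case analysis on the three Reidemeister moves R1, R2, R3. For each move I would first use the quandle axioms satisfied by $(X_i,\lhd)$ to construct the canonical bijection $\mathcal{B}_i$: because $\lhd$ is idempotent, right-invertible and self-distributive (as noted after \eqref{kihon}), the colors of the arcs created by any Reidemeister move are uniquely determined by the colors on the unchanged arcs, and an analogous remark applies to the region labels of the shadow colorings via \eqref{kihos12}. This yields $\mathcal{B}_i$ canonically. Once $\mathcal{B}_i$ is in place, the identity $\mathcal{T}_\psi=\mathcal{T}'_\psi\circ(\mathcal{B}_1\otimes\mathcal{B}_2\otimes\mathcal{B}_3)$ reduces to comparing the crossing weights $\mathcal{W}_{\psi,\tau}$ contributed on the two sides of each move.

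For R1, a single crossing is created or destroyed by an arc passing over itself; by direct inspection of the labels around a kink as in Figure \ref{kout1144}, one of the three inputs of $\psi$ in \eqref{skew} is forced to vanish, so $\mathcal{W}_{\psi,\tau}=0$ and the trilinear sum is unchanged. For R2, two crossings of opposite signs $\epsilon_\tau=\pm 1$ are introduced with matching arc- and region-labels; substituting into the weight formula and using the $G$-invariance \eqref{skew4} together with the identity $1-g^{-1}=-(1-g)g^{-1}$, the two contributions cancel.

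The main obstacle is R3, where all three crossings on each side of the move change and the equality of weight sums becomes a nontrivial identity for the map $X_1\times X_2\times X_3\to A$ of \eqref{skew}. This identity is exactly the quandle 3-cocycle condition on $(X,\lhd)$, and \eqref{skew} was introduced in \cite[Corollary 4.6]{Nos2} precisely as such a cocycle; invoking that result finishes the R3 verification. Combining the three local checks gives invariance under any single Reidemeister move, hence under isotopy. The final statement that the equivalence class depends only on $(f,M_1,M_2,M_3,\psi)$ is then immediate, since $\mathcal{T}_\psi$ is manufactured from exactly this data.
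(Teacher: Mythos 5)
Your overall architecture (construct $\mathcal{B}_i$ from the quandle axioms, then check R1 by a vanishing slot, R2 by cancellation of the two opposite crossings, R3 by an identity among the three weights on each side) is the same as the paper's, and the R1 and R2 parts match what the paper does (it disposes of R1 via $\psi(0,y,z)=0$ and calls R2 ``clear by a similar discussion''). The problem is your R3 step.

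You claim the R3 identity ``is exactly the quandle 3-cocycle condition on $(X,\lhd)$'' and propose to finish by citing \cite[Corollary 4.6]{Nos2}. That does not suffice. The quandle 3-cocycle condition is the statement that the weight sums agree when a \emph{single} shadow coloring (equivalently, the diagonal $\mathcal{S}_1=\mathcal{S}_2=\mathcal{S}_3$, which already forces $M_1=M_2=M_3$) is pushed through an R3 move; the paper makes this identification explicit in Remark \ref{cl31l}, where the quandle cocycle invariant is exactly $\mathcal{T}_{\psi}\circ(\bigtriangleup\times\mathrm{id})\circ\bigtriangleup$. What Theorem \ref{alspw23} needs is the \emph{polarized} identity for three independent shadow colorings valued in three possibly distinct $G$-modules, and this is strictly stronger: a trilinear form over an arbitrary abelian group $A$ is not determined by its diagonal (polarization only recovers the symmetrization, and only up to the factor $3!$), so ``diagonal difference $=0$'' does not yield ``trilinear difference $=0$''. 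Moreover the paper cites \cite[Corollary 4.6]{Nos2} only for the \emph{definition} of the map \eqref{skew}, not for a trilinear cocycle identity, so there is no prior result to invoke. The paper's proof instead writes out the three weights on each side of the R3 move for the general triple $(\mathcal{S}_1,\mathcal{S}_2,\mathcal{S}_3)$ with arc colors $x_i=(a_i,g)$, $y_i=(b_i,h)$, $z_i=(c_i,k)$ and region color $r_i$, and verifies the equality of the two three-term sums directly from the $G$-invariance \eqref{skew4} and multilinearity. That explicit (if tedious) computation is the actual content of the theorem and cannot be outsourced to the diagonal cocycle condition; your proof is incomplete until you carry it out.
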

\begin{proof}
%So we may assume that $ D$ is related to $D'$ by a Reidemeister III.
We first focus on Reidemeister move of type III;
%The change under the move is
see Figure \ref{kout11}.
Then, considering the correspondence in Figure \ref{kout11} with $x_i,y_i,z_i \in X_i$, we have the bijection $\mathcal{B}_{i}$.
% illustrated in the figure.
Moreover, we suppose that the left region is colored by $r_i \in M$.
Thus, it is enough to show the desired equality.
% following 1:1-correspondence.
%The value assigned on the middle string has the same for the
%left and right of the figure. Therefore,
%for the other four crossings, it is enough to show
%a direct calculation shows
%the following identity: , and they also occupy the same position when the cocycles are read (asbeads are slidden) along the component.
%Hence, we shall show that the sum weights of the four crossings in Figure \ref{},
For this, take $a_i,b_i,c_i \in M_i$ and $g,h,k \in G$ such that $x_i = (a_i,g), \ y_i = (b_i,h), \ z_i= (c_i,k) \in X_i$.
Then, the sum from the left side is, by definition and examining the figure, computed as
\[ \psi\bigl((r_1 - a_1) (1-g), \ a_2 - c_2, \ c_3 (1-k^{-1})\bigr) + \psi \bigl((r_1 g- a_1 g+ a_1 -b_1)(1-h), \ b_2 - c_2, \ c_3 (1-k^{-1})\bigr) \]
\[ + \psi \bigl((r_1 - a_1) k (1-k^{-1}gk), \ (a_2 - b_2)k, \ (b_3 k -c_3k +c_3) (1-k^{-1}h^{-1}k)\bigr) . \]
On the other hand, the sum from the right side is formulated as
\[ \psi\bigl((r_1 - a_1) (1-g), \ a_2 - b_2, \ b_3 (1-h^{-1})\bigr) + \psi\bigl((r_1 - b_1) (1-h), \ b_2 - c_2, \ c_3 (1-k^{-1})\bigr) \]
\[ + \psi \bigl((r_1 - a_1) h(1-h^{-1}gh), \ (b_2 - a_2)h+a_2 -c_2, \ c_3 (1-k^{-1})\bigr) . \]
Then, an elementary calculation from \eqref{skew4} can show the two sums are equal.
However, since the calculation is a little tedious, we omit the detail.

Finally, the required equality concerning Reidemeister moves of type I immediately follows from $\psi (0,y,z)=0$, and
the invariance of type II is clear by a similar discussion.
\end{proof}

\

\

\vskip 0.45pc
\vskip 0.45pc
\vskip 0.45pc
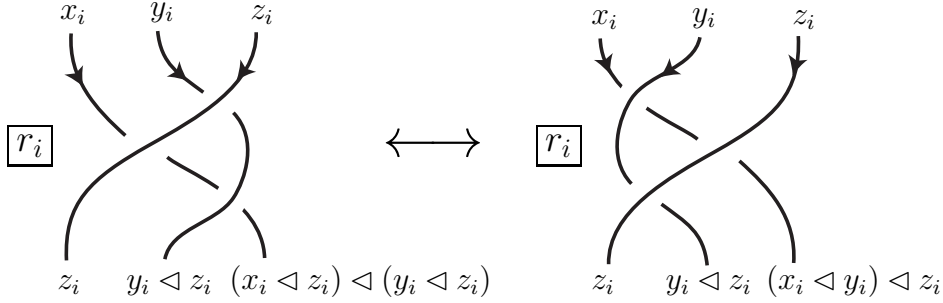
\begin{figure}[htpb]
\begin{picture}(20,0)

%\put(-89,5){\large \fbox{$a$}}
\put(69,-2){\Large \fbox{$r_i$}}
\put(269,-2){\Large \fbox{$r_i$}}

%\put(52,28){\large $x$ }
%\put(99,28){\large $y$ }
\put(89,48){\large $x_i$ }
\put(123,49){\large $y_i$ }
\put(161,48){\large $z_i$ }

\put(290,46){\large $x_i$ }
\put(328,47){\large $y_i$ }
\put(366,46){\large $z_i$ }

\put(88,-54){\large $z_i$ }
\put(114,-54){\large $y_i \lhd z_i $ }
\put(153,-54){\large $(x_i\lhd z_i) \lhd ( y_i \lhd z_i)$ }

\put(290,-54){\large $z_i$ }
\put(318,-54){\large $y_i \lhd z_i $ }
\put(356,-54){\large $ (x_i\lhd y_i) \lhd z_i $ }
%\put(162,18){\huge $ \longleftrightarrow $ }
\put(211,-4){\huge $ \longleftrightarrow $ }
\put(26,-9){\pc{riiimove62}{0.679627304}}
\put(226,-9){\pc{riiimove321}{0.679627304}}
\end{picture}

\vskip 4.285pc

\caption{\label{kout11} The 1:1-correspondence associated with a Reidemeister move of type III. }%Positive and negative crossings, and complementary regions marked by $A$.}
\end{figure}

\begin{rem}\label{als1p32w111}
%We can easily see, by direct computations, that the linear forms are invariant with respect to Reidemeister moves.
In this way, the construction for trilinear forms is applicable to not only tame links in $S^3$,
but also handlebody-knots $\mathcal{H}_g$ in $S^3$.
% and for knotted surfaces $\Sigma_g$ in the 4-sphere $S^4$.
In fact, as a similar discussion to %following
\cite{IIJO}, we can easily check that the trilinear form is invariant with respect to the diagrammatic moves of handlebody-knots; see \cite[Figures 1 and 2]{IIJO} for the moves.
% with using the trilinear map $ \Psi $ in \eqref{skew}, we
%can easily formulate invariants of representations $\pi_1 (S^3 \setminus \mathcal{H}_g)\ra G$, which are trilinear forms. However, this paper does not describe the details.
\end{rem}

\subsection{Topological meaning of the trilinear forms}\label{sss4}

%We here remark that, for another diagram $D'$ and the bijection $\mathcal{B}_{D',D}$ mentioned in Lemma \ref{alspw},
%the cocycle condition ensures the equality $ \mathcal{Q}_{\psi,D}= \mathcal{Q}_{\psi,D'}\circ \mathcal{B}_{D',D} $ as a map; cf. \cite[??]{CJKLS}.

As mentioned in the introduction,
we will show (Theorems \ref{mainthm3} and \ref{mainthm4}) that the trilinear forms of some links are equal to
the trilinear pairings (The proofs of the theorems appear in \S \ref{Lbo33222}). %\ref{Lbo33222}.

%\noindent
%In addition, the trilinear form $ \mathcal{T}_{\psi}$ is interpreted from the fundamental class as follows:
\begin{thm}\label{mainthm3}
Let $M_1, M_2, M_3$ be $G$-modules as in Definition \ref{deals23}. % and Theorem \ref{mainthm1}.
Furthermore, choose a fundamental class $ [ E_L ,\partial E_L ] $ in $ H_3( E_L ,\partial E_L ;\Z )\cong \Z $.

We assume that $L$ is either a hyperbolic link or a prime knot which is neither a cable knot nor a torus knot.
%Further, we use notation in \eqref{kiso}.
%Let the involution $\bar{}:A \ra A $ be the identity.
%, and the last one is
Then, via the identification \eqref{g21gg33},
the trilinear form $ \mathcal{T}_{\psi}$ %restricted on the first cohomologies $ H^1(E_L ,\partial E_L ; M_i )$
is equal to the following composite map:
\begin{equation}\label{skew8} \bigotimes_{i: \ 1\leq i \leq 3 } H^1( E_L ,\partial E_L ; M_i ) \xrightarrow{\ \ \smile \ \ } H^3( E_L ,\partial E_L ; M_1 \otimes M_2 \otimes M_3 ) \xrightarrow{ \  \psi\circ \langle \bullet , [ E_L ,\partial E_L ] \rangle \
} A . \end{equation}
\noindent
\end{thm}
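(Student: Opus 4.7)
The plan is to realize both sides of the equality on a common cochain model adapted to the diagram $D$, and then match them crossing by crossing. First I would use Theorem \ref{mai1} together with the bijection \eqref{kihos12} to identify each shadow coloring $\mathcal{S}_i$ with an explicit relative 1-cocycle $\phi_i \in Z^1(E_L,\partial E_L;M_i)$: the region labels $\lambda_i$ serve as a 0-cochain whose coboundary along each arc reproduces the difference prescribed by the shadow rule in Figure \ref{fig.color}, and the quandle axiom at each crossing translates into the 1-cocycle relation. Under this translation the cup product $\phi_1 \smile \phi_2 \smile \phi_3$ becomes a relative 3-cocycle, and Theorem \ref{mainthm3} reduces to the claim that its evaluation on $[E_L,\partial E_L]$, composed with $\psi$, equals $\sum_\tau \mathcal{W}_{\psi,\tau}(\mathcal{S}_1,\mathcal{S}_2,\mathcal{S}_3)$.

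Next I would fix a handle decomposition of $E_L$ adapted to $D$, in which each arc contributes a 1-handle and each crossing contributes a 3-handle; such decompositions are standard in diagrammatic treatments of link invariants. The relative fundamental class is then represented by the signed sum $\sum_\tau \varepsilon_\tau [\tau]$ of the 3-handles, and the cellular cup product $\phi_1 \smile \phi_2 \smile \phi_3$ evaluated on $[\tau]$ records a quantity in $M_1 \otimes M_2 \otimes M_3$ built from the colors of the arcs and regions around $\tau$. The formula \eqref{skew}, taken from \cite[Corollary 4.6]{Nos2}, was constructed precisely so that applying $\psi$ to this cellular evaluation reproduces $\mathcal{W}_{\psi,\tau}$. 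Summing over $\tau$ would then yield the desired equality, giving the local-to-global step of the proof.

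The crux of the argument, and its main obstacle, is justifying that the diagrammatic 3-chain $\sum_\tau \varepsilon_\tau [\tau]$ really represents $[E_L,\partial E_L]$ and that the cellular cup product may be computed locally at each crossing. This is where the hypothesis on $L$ becomes essential: when $L$ is hyperbolic, or a prime knot that is neither a cable nor a torus knot, each peripheral subgroup of $\pi_1(E_L)$ is malnormal, which allows one to identify the relative homology of $(E_L,\partial E_L)$ with the relative group homology of the pair $(\pi_1(E_L),\{\pi_1(T_j)\})$ and to pull the cup product back to group cohomology. Without this malnormality one cannot in general represent the cup product by a single group 3-cocycle applied to peripheral data, and the formula \eqref{skew} would fail to capture it. Modulo this group-theoretic identification, the remainder is a direct but somewhat tedious local verification at each crossing, analogous to but longer than the Reidemeister-III calculation already carried out in the proof of Theorem \ref{alspw23}.
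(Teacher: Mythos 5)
Your overall architecture is reasonable --- translate shadow colorings into relative $1$-cocycles, represent $[E_L,\partial E_L]$ by a chain indexed by the crossings, and evaluate the cup product locally --- but the step you yourself flag as ``the crux'' is exactly the nontrivial content of the theorem, and your proposal does not supply it. There is no standard handle decomposition of $E_L$ with ``one 3-handle per crossing'' on which the relative fundamental class is simply $\sum_\tau \varepsilon_\tau[\tau]$; in the actual constructions (Inoue--Kabaya type arguments, and Proposition \ref{oo} of this paper, imported from \cite{Nos6}) each crossing contributes a signed sum of several simplices in the bar complex of $\pi_L$, and proving that the total is a relative cycle carrying the class $[E_L,\partial E_L]$ is a substantial theorem, not a local verification. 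Moreover, your stated reason for the hypothesis on $L$ is off target: the identification $H_*(E_L,\partial E_L;M)\cong H_*(\pi_L,\partial\pi_L;M)$ needs only asphericity of $E_L$ and $\pi_1$-injectivity of the boundary tori, which hold without malnormality. Malnormality of the peripheral subgroups is used in \cite{Nos6} to establish that the diagrammatically constructed $3$-chain actually represents the fundamental relative $3$-class; conflating these two points hides the real gap.

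For comparison, the paper does not attempt your direct three-coloring local computation. It first treats the \emph{diagonal} case $\mathcal{S}_1=\mathcal{S}_2=\mathcal{S}_3$ by citing Proposition \ref{oo}, which says $\mathcal{T}_\psi(\mathcal{S}_{\tilde f},\mathcal{S}_{\tilde f},\mathcal{S}_{\tilde f})=\psi\langle\Psi,\tilde f_*[E_L,\partial E_L]\rangle$ for an explicit relative group $3$-cocycle $\Psi$; it then checks from the formula \eqref{cupprod} that $\Psi$ is precisely the cup cube $\mathcal{C}'\smile\mathcal{C}'\smile\mathcal{C}'$ of a cocycle cohomologous to $\mathcal{C}$; finally it recovers the full trilinear statement from the diagonal one by a polarization trick, setting $M=M_1\times M_2\times M_3$ with the auxiliary form $\overline{\psi}$ and the inclusions $\iota_j$, and chasing the resulting commutative diagram. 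If you want to salvage your route, you would either have to reprove the content of Proposition \ref{oo} for three independent colorings (a long computation in the spirit of, but much heavier than, the Reidemeister-III check in Theorem \ref{alspw23}), or adopt the paper's reduction to the diagonal case, which is where the cited result can be used as a black box.
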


In addition, we mention the torus knot, although we need a condition.
% to show the equality to \eqref{skew8}.
%. In this situation, we should the equality modulo an integer.
More precisely,
\begin{thm}\label{mainthm4}
Let $M_1, M_2, M_3$, $\psi$, and $ [ E_L ,\partial E_L ] $ be as above. %$G$-modules as in Definition \ref{deals23}.
% Furthermore, choose a fundamental class $ [ E_L ,\partial E_L ] $ in $ H_3( E_L ,\partial E_L ;\Z ) \cong \Z $.

Assume that $L$ is the $(m,n)$-torus knot.
Then, the trilinear form $\mathcal{T}_{\psi}$ is equal to the composite \eqref{skew8} modulo the integer $nm \in \Z $.
\end{thm}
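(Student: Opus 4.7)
The plan is to adapt the strategy used for Theorem \ref{mainthm3} to the torus knot $T(m,n)$, isolate the single step where hyperbolicity (or malnormality) enters, and show that the resulting defect lies in $mn \cdot A$.

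First, I would reprise the framework from the proof of Theorem \ref{mainthm3}: via Theorem \ref{mai1}, each shadow coloring $\mathcal{S}_i$ represents a class in $H^1(E_L,\partial E_L;M_i)$; the diagrammatic weight $\mathcal{W}_{\psi,\tau}$ is the value at $\tau$ of a quandle (equivalently, group) $3$-cocycle whose image under the canonical chain-level comparison with singular cohomology realises the triple cup product; and $\sum_\tau \mathcal{W}_{\psi,\tau}$ is the evaluation of this $3$-cocycle on a \emph{diagrammatic} relative $3$-cycle built from the crossings of $D$. The only step where the hypotheses of Theorem \ref{mainthm3} are actually used is in matching this diagrammatic $3$-cycle, up to boundaries, with the fundamental relative class $[E_L,\partial E_L]$. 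When those hypotheses are dropped, the two cycles still agree away from the peripheral torus, and their difference is supported on a cycle in $T^{2}=\partial E_L$.

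For $L=T(m,n)$, I would then work with the standard presentation $\pi_1(E_L)=\langle a,b \mid a^m=b^n\rangle$, whose centre is generated by $z:=a^m=b^n$, and use the classical peripheral relation $\lambda = z\,\mu^{-mn}$ (up to sign), where $\mu$ and $\lambda$ are the meridian and the preferred longitude. The defect cycle on $T^{2}$ can therefore be described in the basis $\{\mu,z\}$ of $\pi_1(T^{2})$. When the triple cup product of three classes pulled back from $H^1(E_L,\partial E_L;M_i)$ is paired with this defect, the $\mu$- and $\lambda$-directions contribute $0$ because the classes vanish on $\partial E_L$; every surviving contribution must therefore run through the $z$-direction. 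Using the relation $\lambda = z\,\mu^{-mn}$ together with the $G$-invariance \eqref{skew4}, each such contribution appears with a coefficient of $mn$, so the defect lies in $mn\cdot A$.

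The main obstacle will be the last step: making rigorous the identification of the peripheral defect as an $mn$-multiple of a cup-product value. The cleanest route is to fix a CW model of $E_L$ adapted to its Seifert fibration, compute the restriction of the cup-product $3$-cochain to the peripheral torus in the basis $\{\mu,z\}$, and use the central relation to collect the factor $mn$; the Euler-number interpretation of $mn$ as the index making $\lambda z^{-1}$ a multiple of $\mu$ in $\pi_1(\partial E_L)$ is what ultimately forces the divisibility. Once this reduction is in place, Theorem \ref{alspw23} guarantees that the resulting equality is independent of the chosen diagram, and the conclusion of Theorem \ref{mainthm4} follows.
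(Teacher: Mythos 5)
Your opening move --- reduce to the proof of Theorem \ref{mainthm3} and isolate the one step where the hypothesis on $L$ enters --- is exactly right, and it matches the paper's structure: the paper's proof of Theorem \ref{mainthm4} consists of invoking the last clause of Proposition \ref{oo} (the statement, imported from \cite{Nos6}, that the diagonal equality \eqref{aref} holds modulo $mn$ for the $(m,n)$-torus knot) and then observing that the polarization and coefficient-change argument in the proof of Theorem \ref{mainthm3} goes through verbatim modulo $mn$. Where you diverge is that you attempt to re-prove that imported clause, and the argument you sketch has a genuine gap.

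The problem is the mechanism you propose for the defect. You claim the difference between the diagrammatic relative $3$-cycle and $\tilde{f}_*[E_L,\partial E_L]$ is ``supported on a cycle in $T^2=\partial E_L$'' and that pairing kills the $\mu$- and $\lambda$-directions while the $z$-direction contributes a factor of $mn$. But $H_3(T^2;\Z)=0$ (and $H_3(\Z^2;\Z)=0$ in the group-homology model), so a degree-$3$ defect literally carried by the peripheral torus is null-homologous and would vanish in $H_3(E_L,\partial E_L)$; moreover a relative $3$-cocycle evaluates to zero on any chain carried by $\partial E_L$, in every ``direction'', not just the $\mu$- and $\lambda$-directions. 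Your mechanism would therefore prove the \emph{integral} equality for torus knots, i.e.\ it proves too much --- the whole point of Theorem \ref{mainthm4} is that the integral statement fails and only the mod-$mn$ statement survives. The relation $\lambda=z\mu^{\mp mn}$ is indeed the ultimate source of the number $mn$, but it enters through the failure of malnormality of the peripheral subgroups (equivalently of the subgroups $K_\ell$ inside $M\rtimes\pi_L$) caused by the center $z=a^m=b^n$, in the identification of the diagrammatically defined class with $\tilde{f}_*[E_L,\partial E_L]$ in the homology of the pair --- which is precisely the content of Proposition \ref{oo} --- and not through a restriction-to-the-boundary computation of cup products. To complete your proof you would either have to cite that clause of Proposition \ref{oo} directly, as the paper does, or reproduce the group-homological computation of \cite{Nos6} for the presentation $\langle a,b\mid a^m=b^n\rangle$.
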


As a concluding remark,
while the triple cup product of a link often is considered to be speculative and uncomputable, it become 
computable from only a link diagram without describing $[E_L, \partial E_L]$ and any triangulation in $S^3 \setminus L$.
%Seifert surfaces. In
% we should emphasize again that
%. (though it is not easy to deal with the longitudes,)
% (In fact, if $ \# L =1$, then the 2-cycle $ \mu_{\ell}$ corresponds to
% the fundamental 3-class
%we can compute the triple cup products of knots from only a link diagram describing no triangulation in $S^3 \setminus L$.

% and triangulations.
%Actually, we later easily give some computations (see \S \ref{Lb162r3}).

\begin{rem}\label{cl31l}
Finally, we compare the trilinear forms in Definition \ref{deals23} with the existing results on ``the quandle cocycle invariants", in detail.
Briefly speaking, the link invariant in \cite{CKS} is constructed from
a quandle $X$ and a map $\Phi: X^3 \ra A $
which satisfy ``the quandle cocycle condition",
and is defined to be a certain map $ \mathcal{J}_{\Phi} \ : \mathrm{SCol}_X(D )\ra A$.
%, respectively.
%we can analyze the mapsin many cases
%For instance, if $X$ is of the finite order as discussed in many papers \cite{CJKLS,Nos }, the image of the map is a computable link-invariant.
Then, we note that
our trilinear form is a trilinearization of the quandle cocycle invariants with respect to quandles of the form $X= M \times G $.
To be precise, %considering the diagonal map $\bigtriangleup : \mathrm{Col}_X(D )\ra \mathrm{Col}_X(D )^2$ and the map $ \Psi $ in \eqref{skew245} which is a quandle 2-
if $M = M_1=M_2=M_3$, we can see that
%Then, it can be directly seen, by definitions, that
the associated invariant
$\mathcal{J}_{\Phi} : \mathrm{SCol}_X(D )\ra A$
%introduced in \cite{CKS}
is equal to the composite
$\mathcal{T}_{\psi} \circ (\bigtriangleup \times {\rm id}) \circ \bigtriangleup$ by definitions.
In conclusion, the theorems also suggest topological meanings of the quandle cocycle invariants with $X= M \times G $.

%We later see finer properties of our invariants than the original cocycle invariants.
\end{rem}

\section{Relation to 3-fold branched coverings}\label{S4}
Although we considered relative cohomology, for $n\in \mathbb{Z}_{\geq 0}$ and a closed 3-manifold $N$, let us consider the triple cup product
\begin{equation}\label{kiso8} H^1(N;\Z/n\Z)^{\otimes 3}\xrightarrow{\ \ \smile \ \ } H^3(N;\Z/n\Z)
\xrightarrow{ \ \ \langle \bullet , \ [N, \partial N] \rangle } \Z /n ,\end{equation}
where the coefficient module $\Z/n$ is trivial. Although there are studies of this map (see, e.g., \cite{MS,CGO,Tur}), there are few examples of the computation.
As an application of the theorems above, this section gives a recover of the triple cup products of $N$, when $N$ is a 3-fold cyclic covering of $S^3 $ branched over a link.

To state Theorem \ref{4i1}, we need some terminology.
Let $G$ be $\Z/3= \langle t | t^3=1 \rangle $.
Consider the epimorphism $f: \pi_1(S^3 \setminus L ) \ra G$ which sends every meridian to $t$, and the associated 3-fold cyclic branched covering $\widetilde{C}_L \ra S^3 $.
%Since the cohomology $ H^1(\widetilde{B}_L;\Z/n\Z)$ is acted on by $ G=\Z/3$, we can modify the triple cup product \eqref{kiso8} as a $ \Z[t ]/ (t^3-1)$-module homomrophsim
%\begin{equation}\label{kiso5} H^1(\widetilde{B}_L;\Z/n\Z)^{\otimes 3}\xrightarrow{\ \ \smile \ \ } H^3(\widetilde{B}_L;\Z/n\Z) \xrightarrow{ \ \ \langle \bullet , \ [\widetilde{B}_L, \partial \widetilde{B}_L] \rangle } \Z[t ]/ (n, t^2-t+1) . \end{equation}

\begin{thm}\label{4i1}
Let $M_1$, $M_2$, and $M_3$ be $\Z[t^{\pm 1}]/ (n, t^2+t+1)$. % as a $ \Z[t]/(t^3 -1 )$-module.
Let $p:\Z[t^{\pm 1}]/ (n, t^2 + t+1) \ra \Z/n$ be the map which sends $ a+tb$ to $a$.
Set up the map $\psi_0 : M^3 \ra \Z/n$ which takes $ (x,y,z)$ to $ xyz$.
As in Theorem \ref{mainthm3}, assume that $L$ is either a hyperbolic link or a prime knot which is neither a cable knot nor a torus knot. %, suppose .

Then, there is an isomorphism $ \mathrm{Col}_{X_i}^{\rm red}(D_f) \cong H^1(\widetilde{C}_L;\Z/n\Z) $ such that the trilinear map $\mathcal{T}_{p \circ \psi_0}$ is equivalent to \eqref{kiso8} with $N= \widetilde{C}_L $.
\end{thm}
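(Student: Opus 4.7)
The plan is to reduce the statement to Theorem \ref{mainthm3} by identifying the twisted cohomology of $E_L$ with coefficients in $M$ with the constant $\Z/n$-cohomology of the branched cover $\widetilde{C}_L$, and then to match the coefficient pairings.

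First, I would apply Theorem \ref{mainthm3}. Viewing $M = \Z[t^{\pm 1}]/(n, t^2+t+1)$ as a right $G = \Z/3$-module via multiplication by $t$, the identity $(t-1)(t^2+t+1) = t^3 - 1$ forces $t^3 = 1$ in $M$, so $(tx)(ty)(tz) = t^3 xyz = xyz$, which verifies $G$-invariance of $p \circ \psi_0$. By the hypothesis on $L$, Theorem \ref{mainthm3} applies, and combined with the identification $\mathrm{Col}_X^{\mathrm{red}}(D_f) \cong H^1(E_L, \partial E_L; M)$ of Theorem \ref{mai1}, it gives that $\mathcal{T}_{p \circ \psi_0}$ coincides with the composite
$$H^1(E_L, \partial E_L; M)^{\otimes 3} \xrightarrow{\smile} H^3(E_L, \partial E_L; M^{\otimes 3}) \xrightarrow{p \circ \psi_0 \circ \langle \cdot, [E_L, \partial E_L]\rangle} \Z/n.$$

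Next, I would establish the isomorphism $H^1(E_L, \partial E_L; M) \cong H^1(\widetilde{C}_L; \Z/n)$. The key ingredient is the short exact sequence of $\Z/n[G]$-modules
$$0 \to \Z/n \xrightarrow{\cdot (1 + t + t^2)} \Z/n[G] \to M \to 0,$$
where $\Z/n$ carries the trivial $G$-action; this is exact because $\Z[G] = \Z[t]/(t^3-1)$ and $(t-1)(t^2+t+1) = t^3 - 1$. Combining its induced long exact sequence in cohomology with Shapiro's lemma $H^*(E_L, \partial E_L; \Z/n[G]) \cong H^*(\widetilde{E}_L, \partial \widetilde{E}_L; \Z/n)$ (for the unbranched 3-fold cover $\widetilde{E}_L$) and the Mayer-Vietoris sequence for the Dehn filling decomposition $\widetilde{C}_L = \widetilde{E}_L \cup \bigsqcup (D^2 \times S^1)$, one extracts the desired isomorphism. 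This is essentially the classical recipe expressing $H^*(\widetilde{C}_L)$ in terms of the Alexander module modulo $(t^2+t+1)$.

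Finally, I would verify that the triple cup product on $\widetilde{C}_L$ with $\Z/n$-coefficients corresponds, under these identifications, to the composite in the first step. By naturality of the transfer, the cup product on $H^*(\widetilde{E}_L, \partial; \Z/n)$ corresponds to the cup product on $H^*(E_L, \partial; \Z/n[G])$ with coefficient pairing given by the ring multiplication of $\Z/n[G]$. Reducing modulo $(t^2+t+1)$ induces the ring product $M \otimes M \otimes M \to M$, and composing with $p$ yields exactly $(x,y,z) \mapsto p(xyz)$. The fundamental classes match because the transfer intertwines $[\widetilde{C}_L]$ with the evaluation against $[E_L, \partial E_L]$ after the Dehn fillings have been absorbed. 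The main obstacle is this last step: tracking the multiplicative structure and the fundamental class through the transfer isomorphism, through the quotient $\Z/n[G] \to M$, and through the Dehn filling; in particular, one has to show that the a priori ad hoc pairing $p \circ \psi_0$ is precisely the one induced by the ring structure on $\Z/n[G]$ reduced to the $(t^2+t+1)$-quotient, and that no correction arises from the passage $\widetilde{E}_L \leadsto \widetilde{C}_L$ on the relevant $H^1$ classes.
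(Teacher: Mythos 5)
Your overall architecture coincides with the paper's: both reduce to Theorem \ref{mainthm3} (so that $\mathcal{T}_{p\circ\psi_0}$ becomes the relative triple cup product on $(E_L,\partial E_L)$ with coefficients in $M$), both pass through Shapiro's lemma to the unbranched $3$-fold cover $\widetilde{E}_L$, and both then descend to the branched cover $\widetilde{C}_L$. The differences in the middle step are minor: where you use the short exact sequence $0 \ra \Z/n \ra \Z/n[G] \ra M \ra 0$ together with Mayer--Vietoris for the Dehn fillings, the paper instead splits $\Z[t]/(n,t^3-1)\cong M\oplus \Z[t]/(n,t-1)$ and quotes Kawauchi's theorem that $H_1(\widetilde{C}_L;\Z)$ is annihilated by $1+t+t^2$ and that $H_1(\widetilde{E}_L;\Z)\ra H_1(\widetilde{C}_L;\Z)$ is a split surjection; either route produces the isomorphism $H^1(E_L,\partial E_L;M)\cong H^1(\widetilde{C}_L;\Z/n)$.

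The genuine gap is exactly where you flag it: the comparison of the two triple products and of the fundamental classes. ``Naturality of the transfer'' and ``the Dehn fillings have been absorbed'' are descriptions of what must be proved, not proofs, and the transfer is not really the relevant device --- the difficulty is that $H^1(\widetilde{C}_L;\Z/n)$ is an absolute group while everything upstream is relative, so one must exhibit an intermediate relative group on $\widetilde{C}_L$ through which both the cohomology classes and the fundamental cycle factor compatibly. The paper does this with the pair $(\widetilde{C}_L, \widetilde{C}_L\setminus\widetilde{E}_L)$: excision gives an isomorphism $j^*$ onto $H^*(\widetilde{E}_L,\partial\widetilde{E}_L;\Z/n)$, the restriction $k^*$ lands in $H^*(\widetilde{C}_L;\Z/n)$, and the class $[\widetilde{C}_L,\widetilde{C}_L\setminus\widetilde{E}_L]$ maps to both $[\widetilde{E}_L,\partial\widetilde{E}_L]$ and $[\widetilde{C}_L]$, so that all the evaluation maps agree in a four-row commutative diagram. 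One then checks that $k^*\circ(j^*)^{-1}\circ\mathcal{S}$ (with $\mathcal{S}$ the splitting injection coming from \eqref{kiso11}) is an isomorphism on $H^1$, and that $p\circ\iota=\mathrm{id}_{\Z/n}$ --- this last point is where the specific choice of $p$ enters, which your sketch does not address. Without some such explicit intermediate pair, the assertion that ``no correction arises from the passage $\widetilde{E}_L\leadsto\widetilde{C}_L$'' is unsupported, and your argument is incomplete at precisely the step that carries the content of the theorem.
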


\begin{proof}[Proof of Theorem \ref{4i1}] We first show the isomorphism $ \mathrm{Col}_{X_i}^{\rm red}(D_f) \cong H^1(\widetilde{C}_L;\Z/n\Z) $.
Let $R$ be the ring $ \Z[t]/(n, t^2 + t+1)$. %as a PID.
By Theorem \ref{mai1}, we have
$\mathrm{Col}_{X_i}^{\rm red}(D_f) \cong H^1(E_L ;\partial E_L ; M) $.
Notice that $H^i(\partial E_L ;M) $ is annihilated by $1-t$. Since
$1-t$ and $1 + t+t^2$ are coprime, we have
\begin{equation}\label{kiso9} H^1(E_L ;\partial E_L ; M) \cong H^1(E_L ; M)\cong \Hom_{ R\textrm{-mod}}( H_1(E_L ,M), R) .\end{equation}
Let $\widetilde{E}_L \ra E_L= S^3 \setminus L$ be the 3-fold covering.
Then, by Shapiro's Lemma (see, e.g. \cite{Bro}), the canonical inclusion $\iota: \Z/n \ra R$ yields the isomorphisms:
\begin{equation}\label{kiso11} H^*( \widetilde{E}_L: \Z/n ) \cong H^*( E_L; \Z[t]/(n, t^3-1)) \cong H^*( E_L; R) \oplus H^*( E_L; \Z[t]/(n,t-1)) .\end{equation}
Here, the second isomorphism is obtained from the ring isomorphism $\Z[t]/(n, t^3-1) \cong R \oplus \Z[t]/(n, t-1) $.
Let $ i : \widetilde{E}_L \hookrightarrow \widetilde{C}_L$ be the inclusion.
According to \cite[Theorem 5.5.1]{Kaw}, the homology $H_1( \widetilde{C}_L; \Z) $ is annihilated by $1+t+t^2$, and the induced map $i_*: H_1( \widetilde{E}_L; \Z ) \ra H_1( \widetilde{C}_L; \Z )$ is a splitting surjection.
Thus, dually, the induced map $i^*: H^1( \widetilde{C}_L; \Z/n ) \ra H^1( \widetilde{E}_L;\Z /n)$ is injective and the image is isomorphic to $H^1( E_ L; R)$.
In summary, we obtained the desired isomorphism.

We will complete the proof.
By \eqref{kiso11}, we have a splitting injection $\mathcal{S}: H^*( \widetilde{E}_L  ,\partial \widetilde{E}_L ;R ) \ra H^*( E_L ,\partial E_L;\Z/n ) $.
Take the canonical maps $ j: ( \widetilde{E}_L ,\partial \widetilde{E}_L) \ra ( \widetilde{C}_L, \widetilde{C}_L \setminus \widetilde{E}_L ) $ and $ k :( \widetilde{C}_L, \emptyset) \ra ( \widetilde{C}_L, \widetilde{C}_L \setminus \widetilde{E}_L) $.
Then, we have the commutative diagrams on the cup products:
$${\normalsize
\xymatrix{
H^1( E_L, \partial E_L ; M)^{\otimes 3}\ar[d]^{\mathcal{S} } \ar[r]^{\!\!\!\!\! \psi_0 \circ \smile} & H^3( E_L, \partial E_L ; M ) \ar[d]^{\mathcal{S} }\ar[rrr]^{\ \ \ \ \ \ \ \ \ \ \ \ \langle \bullet, [E_L,\partial E_L] \rangle } & & & R \\
H^1( \widetilde{E}_L ,\partial \widetilde{E}_L ; \Z/n)^{\otimes 3} \ar[r]^{\!\!\!\!\!\!\!\! \smile} & H^3 ( \widetilde{E}_L ,\partial \widetilde{E}_L ; \Z/n ) \ar[rrr]^{\ \ \ \ \ \ \ \ \ \ \ \ \langle \bullet, [\widetilde{E}_L ,\partial \widetilde{E}_L ] \rangle }& & & \Z/n \ar@{=}[d] \ar[u]^{\iota }\\
H^1( \widetilde{C}_L, \widetilde{C}_L \setminus \widetilde{E}_L ; \Z/n)^{\otimes 3} \ar[r]^{\!\!\!\!\!\!\!\! \smile} \ar[u]^{\cong }_{j^*}\ar[d]^{k^*} & H^3 ( \widetilde{C}_L, \widetilde{C}_L \setminus \widetilde{E}_L; \Z/n) \ar[u]^{\cong }_{j^*}\ar[d]^{k^*} \ar[rrr]^{\ \ \ \ \ \ \ \ \ \ \ \ \langle \bullet, [ \widetilde{C}_L, \widetilde{C}_L \setminus \widetilde{E}_L] \rangle }& & & \Z/n \ar@{=}[d]
\\
H^1( \widetilde{C}_L ; \Z/n)^{\otimes 3} \ar[r]^{\!\!\!\!\!\!\!\! \smile} & H^3 ( \widetilde{C}_L; \Z/n) \ar[rrr]^{\ \ \ \ \ \ \ \ \ \ \ \ \langle \bullet, [ \widetilde{C}_L] \rangle }& & & \Z/n .
}}
$$
Here, the vertical maps $j^*$ are isomorphisms by the excision axiom.
Moreover, by the discussion in the above paragraph, the composite $k^* \circ (j^*)^{-1}\circ \mathcal{S}$ is an isomorphism from $ H^1( E_L, \partial E_L ; M) $.
Hence, since $ p \circ \iota :\Z/n \ra \Z/n $ is an isomorphism, the following two composites are equivalent:
$$ p\circ \psi_0 \circ \langle \bullet, [E_L,\partial E_L] \rangle \circ \smile, \ \ \ \ \ \langle \bullet, [\widetilde{C}_L] \rangle \circ \smile.$$
By Theorem \ref{mainthm3}, the left hand side is equal to the trilinear map $\mathcal{T}_{p\circ \psi_0}$.
Hence, $\mathcal{T}_{p \circ \psi_0}$ is equivalent to \eqref{kiso8} with $N= \widetilde{C}_L $ as desired.
%Here we should notice an elemetary lemma:
%\begin{lem}\label{531} Let $ B$ be a $ \Z[t]/(n, t^2-t+1)$-module. Let $\iota :\Z \hookrightarrow \Z[t]/(n, t^2-t+1) $ send $n$ to $n$.
%Then, any additive map $\phi: B \ra \Z[t]/(n, t^2-t+1)$ is uniquely extended to a $ \Z[t]/(n, t^2-t+1)$-module homomorphism $\overline{\phi}: B \ra \Z[t]/(n, t^2-t+1)$. \end{lem}
%Thus, $\Hom_{ A\textrm{-mod}}( H_1(E_L ,M), A) \cong \Hom_{ \Z \textrm{-mod}}( H_1(E_L ,M), \Z/n) $. In summary, we obtained the desired isomorphism.
%Since $ \Z[t]/(t^2-t+1)$ is PID, the the elementar divisor theorem imples $\Hom_{ A\textrm{-mod}}( H_1(E_L ,M), A) \cong H_1(E_L ,A) $.
% It follows from \cite[Theorem ???]{Kaw} that
\end{proof}

\section{Examples as diagrammatic computations}\label{Lb162r3}
%As a result of Theorems \ref{mainthm1} and \ref{mainthm2} on the cup products,

\subsection{General situation for the trefoil knot and the figure eight knot}\label{Lexa1}
%result of the formula of the cup products,
% or the $(m,m)$-torus link $T_{m,m}$.
% In \S \ref{Lb12r3244}, we later calculate the Blanchfield pairing of the torus knot.

%\vskip 1.3227185pc
\begin{figure}[htpb]
\begin{center}
\begin{picture}(10,60)
%\put(-128,42){\LARGE $D $}
\put(-163,23){\large $\alpha $}
\put(-148,45){\large $\beta $}
\put(-91,23){\large $\gamma $}
\put(-23,51){\large $\alpha_2 $}
\put(-44,19){\large $\alpha_1 $}
\put(15,10){\large $\alpha_4 $}
\put(37,19){\large $\alpha_3 $}

% \put(88,64){\large $\alpha_1 $}
\put(88,51){\large $\alpha_1 $}
\put(87,-2){\large $\alpha_m $}
\put(88,24){\large $\alpha_i $}

\put(91,10){\normalsize $\vdots $}
\put(91,38){\normalsize $\vdots $}
%\put(88,24){\large $\alpha_i $}

%\put(-18,42){\LARGE $\CC $}
%\put(53,42){\LARGE $\Gamma_{\CC} $}
\put(-161,22){\pc{coloringtrefoil4}{0.53104}}
\put(-36,22){\pc{pic12b}{0.402104}}
\put(96,26){\pc{mutorus3}{0.2473104}}
\end{picture}
\end{center}\caption{\label{ftf} The trefoil knot and the figure eight knot}
% , and the $T_{m,m}$-torus link with labeled arcs.}
\end{figure}
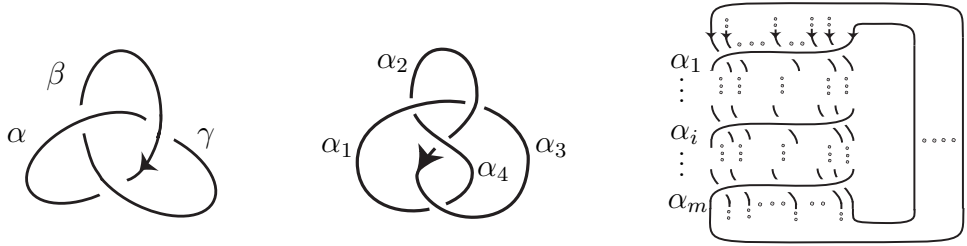

We will compute the trilinear forms $ \mathcal{T}_{\psi}$
associated with some homomorphisms $f : \pi_L \ra G$, where $L$ is either the trefoil knot or the figure eight knot. %&, and the $(m,m)$-torus link $T_{m,m}$.

As a simple example,
we will focus on the the trefoil knot $3_1 $.
Let $D$ be the diagram of $K$ as illustrated in Figure \ref{ftf}.
%and fix a homomorphism $f: \pi_K \ra G$.
Note the Wirtinger presentation $\pi_L \cong \langle \alpha, \beta \ | \ \alpha \beta \alpha =\beta \alpha \beta \rangle . $
Then, we can easily see
that a correspondence $ \mathcal{C}: \{ \alpha, \beta , \gamma \} \ra X$
with
$$\mathcal{C}(\alpha )=(a_i ,g) , \ \ \ \ \mathcal{C}(\beta )=(b_i ,g), \ \ \ \ \mathcal{C}(\alpha )=(c_i ,g) \ \in M_i \times G $$
is an $X$-coloring $ \mathcal{C}$ over $f: \pi_L \ra G$,
%given an $X$-coloring $ \mathcal{C}$ with , %, \ denote
%$$ \mathcal{C}(\alpha)=(x,a), \ \ \ \ \mathcal{C}(\beta)=(y,b), \ \ \ \ \mathcal{C}(\gamma)=(z,c)\in M \times G = X. $$
if and only if it satisfies the four equations
\begin{equation}\label{eweq22} g = f(\alpha ), \ \ h = f(\beta ), \ \ \ ghg =hgh , %\notag
\end{equation}
\begin{equation}\label{eq22} c_i = a_i \cdot h +b_i \cdot (1-h ), \end{equation}
\begin{equation}\label{eq221} (a_i -b_i ) \cdot (1-g +hg)=(a_i -b_i ) \cdot (1-h +gh)=0 . \end{equation}
%Take the associated shadow coloring $ \mathcal{S}_1 \in \mathrm{Col}_{X_1}(D_f)$, we suppose that the unbouded regition is colored by $r \in M$.
Furthermore, given a $G$-invariant linear form $\psi$,
the sum $\mathcal{T}_{\psi}$ is equal to
\noindent
\[ \psi \bigl( -a_1 \cdot (1-g), \ a_2-b_2, \ a_3\cdot (1-h^{-1})\bigr) + \psi \bigl( -b_1 \cdot (1-h), \ b_2-c_2, \ c_3 \cdot (1-h^{-1}g^{-1} h ) \bigr)\]
%\ \ \ $ %\ovalbox{2} $ \vskip 0.1245pc
\[ \ \ + \psi \bigl( -c_1 \cdot (1-h^{-1}g h ), \ c_2-a_2, \ a_3 \cdot (1-g^{-1})\bigr), \]
by definition. Then, by canceling out $c_i$ by using \eqref{eq22} and \eqref{eweq22},
we %Similarly, in the trilinear case, we
can easily obtain the resulting computation: for $((a_i,g),( b_i,h)) \in {\rm SCol}_{X_i} (D_{f})\subset M_i^2 $,
\begin{equation}\label{eq22131} \mathcal{T}_{\psi} \bigl( (a_1, b_1),( a_2, b_2),( a_3, b_3 ) \bigr)= \psi \bigl(
(a_1 -b_1)g^{-1} , \ (a_2 -b_2)\cdot h , \ a_3 -b_3 \bigr) \in A. \end{equation}
% where we replace $M_1$ by $M$, $M_2$ by $M'$, $M_3$ by $M''$, and

\

Next, we will compute $\mathcal{T}_{\psi} $ of the figure eight knot.
However, the computation can be done in a similar way to the trefoil case. So we only describe the outline.

Let $D$ be the diagram with arcs as illustrated in Figure \ref{ftf}.
%Let $D$ be the diagram of the trefoil knot as illustrated in Figure \ref{ftf}.
%and fix a homomorphism $f: \pi_K \ra G$. Note the Wirtinger presentation $\pi_L \cong \langle \alpha_1, \alpha_2 \ | \ \alpha_1 \alpha_2 \alpha_1 =\alpha_2 \alpha_1 \alpha_2 \rangle . $
Similarly, we can see
that a correspondence $ \mathcal{C}: \{ \alpha_1, \alpha_2, \alpha_3, \alpha_4\} \ra X$
with $\mathcal{C}(\alpha_i )=(x_i ,z_i) \in M_i \times G $
is an $X$-coloring $ \mathcal{C}$ over $f: \pi_L \ra G$,
if and only if it satisfies the following equations:
\begin{equation}\label{eq225dd} z_i = f(\alpha_i ), \ \ \ \ \ z_2^{-1} z_1 z_2= z_1^{-1} z_2^{-1}z_1 z_2 z_1^{-1} z_2 z_1 \in G , \end{equation}
\begin{equation}\label{eq225} x_3 = (x_1 -x_2 )\cdot z_2 +x_2 , \ \ \ \ \ \ x_4 = (x_2 -x_1 )\cdot z_1 +x_1, \end{equation}
\begin{equation}\label{eq2215} (x_1 -x_2) \cdot (z_1 + z_2 - 1 )=(x_1 -x_2) \cdot (1-z_2^{-1} ) z_1 z_2 =(x_1 -x_2) \cdot (1-z_1^{-1} ) z_2 z_1 \in M.
\end{equation}
Accordingly, it follows from \eqref{eq225} that the set $ \mathrm{Col}_{X } (D_{f}) $ is generated by $x_1, x_2$.

Given a $G$-invariant trilinear form $\psi$,
it can be seen that the trilinear form $\mathcal{T}_{\psi} \bigl( ( x_1, x_2),( x_1', x_2' ),( x_1'', x_2'' ) \bigr)$ is expressed as
%\begin{equation}\label{eq2211522}
\[ \psi ((x_1-x_2) \cdot z_1 z_2^{-1}, \ x_2'-x_1', \ (x_1''-x_2'') \cdot (1-z_2^{-1}))+ \psi ((x_1-x_2) \cdot z_2^{-1}z_1 , \ (x_1'-x_2') \cdot (1-z_1), \ (x_1''-x_2'') \cdot (1-z_2^{-1})z_1). \]
%\mathcal{Q}_{\psi} \bigl( (x_1, x_2),( x_1', x_2' ) \bigr)= \psi \bigl( x_1 -x_2 , \ (x_1' -x_2') \cdot (1-z_1^{-1} - z_2^{-1} + z_1 z_2^{-1}+ z_2 z_1^{-1})\bigr) \in A, \end{equation}
%where $(x_1^{(')}, x_2^{(')}) \in {\rm Col}_{X^{(')}} (D_{f})\subset (M^{(')})^2 $.
\begin{rem}\label{eq3}
Here, we should give some examples from concrete $M$ and $\psi.$
In particular, the author attempted to get non-trivial trilinear form $\mathcal{T}_{\psi}$ when
$G$ is a Lie group and $M$ is a representation of $G$. However, even if $G=SL_2(\mathbb{R})$ or $G=SL_2(\mathbb{C})$ and $M= \mathbb{C}^2$ or $\mathbb{C}^3 $, the author computed the resulting $\mathcal{T}_{\psi}$ equal to zero.
Indeed, the author could not find non-trivial examples of $\mathcal{T}_{\psi} $ except those in \S \ref{secex3}.

Thus, it is a problem to find non-trivial examples of $\mathcal{T}_{\psi} $ from representations with respect to Lie groups.
\end{rem}

\subsection{The $(m,m)$-torus link $T_{m,m}$}\label{Lb12r32}
We also calculate the trilinear form $\mathcal{T}_{\psi}$ concerning the $(m,m)$-torus link,
following from Definition \ref{deals23}.
These calculations will be useful in the paper \cite{Nos4}, which
suggests invariants of ``Hurewitz equivalence classes".

Let $L$ be the $(m,m)$-torus link $T_{m,m}$ with $m \geq 2$,
and let $\alpha_1, \dots, \alpha_m$ be the arcs depicted in Figure \ref{ftf}.
Furthermore, let us identity $\alpha_{i+m}$ with $\alpha_i$ of period $m$.
By Wirtinger presentation, we have a presentation of $ \pi_L$ as
$$\langle \ a_1, \dots, a_m \ | \ a_1 \cdots a_m= a_m a_1 a_2 \cdots a_{m-1}= a_{m-1}a_m a_1 \cdots a_{m-2}= \cdots = a_2 \cdots a_m a_1 \ \rangle . $$

Given a homomorphism $f:\pi_L \ra G$ with $f(\alpha_i) \in G $,
let us discuss $X$-colorings $ \mathcal{C}$ over $f$.
Then, concerning the coloring condition on the $\ell $-th link component, it satisfies the equation
\begin{equation}\label{coleq} \bigl( \cdots (\mathcal{C}(\alpha_\ell ) \lhd \mathcal{C}(\alpha_{\ell+1})) \lhd \cdots \bigr)\lhd \mathcal{C}(\alpha_{\ell+m-1}) =\mathcal{C}( \alpha_\ell) ,\ \ \ \ \ \ \ {\rm for \ any \ }1 \leq \ell \leq m . \end{equation}
%Given an $X$-coloring $\mathcal{C}$.
With notation $ \mathcal{C} (\alpha_i):= (x_i,z_i) \in X$, this equation \eqref{coleq} reduces to a system of linear equations
\begin{equation}\label{aac} ( x_{\ell-1} -x_{\ell} ) + \sum_{ \ell \leq j \leq \ell+m-2 }( x_j -x_{j+1} ) \cdot z_{j+1} z_{j+2} \cdots z_{m +\ell } = 0 \in M , \ \ \ \ \ \mathrm{for \ any \ } 1 \leq \ell \leq m .
\end{equation} %\label{cyclednon}
%, this \eqref{coleq} reduces to
%we easily see
%\end{equation} %\label{cyclednon}
Conversely, we can easily verify that, if a map $\mathcal{C}: \{ \mbox{arcs of $D$} \} \to X$ satisfies the equation \eqref{aac}, then $\mathcal{C} $ is an $ X$-coloring.
Denoting the left side in \eqref{aac} by $ \Gamma_{f,k}(\vec{x})$, consider a homomorphism
$$ \Gamma_{f}: M^m \lra M^m ; \ \ \ (x_1, \dots, x_m) \longmapsto (\Gamma_{f,1}(\vec{x}) , \dots, \Gamma_{f,m}(\vec{x}) ). $$
To conclude, the set $\mathrm{Col}_X(D_f )$ coincides with the kernel of $\Gamma_{f} $.

Next, we precisely formulate the resulting trilinear form. % in Definition \ref{deals23}.
\begin{prop}\label{aa11c} %Consider the sum $ \Gamma_{\mathcal{S}}:= \oplus_{i=1}^m \Gamma_i : M^m \ra M^m.$
Let $f : \pi_1(S^3 \setminus T_{m,m})\ra G$ be as above.
Let $\psi: M^3 \ra A$ be a $G$-invariant linear functions.
%given an $G$-invariant trilinear function $h: M^3 \ra A$,
Then, the trilinear form $\mathcal{T }_{\psi}: \Ker (\Gamma_{f})^{\otimes 3} \ra A$ sends $(w_1, \dots, w_m) \otimes (x_1, \dots, x_m) \otimes ( y_1, \dots, y_m)$ to
%\[
\begin{equation}\label{bbbdd2}
\sum_{\ell=1}^{m } \sum_{k=1}^{m-1 } \psi \bigl( w_\ell \cdot (1 - z_\ell ) \hat{z}_{\ell+1; \ell+k -1 }, \ \sum_{j=1}^{k } (x_{j+\ell-1}-x_{j +\ell })\cdot \hat{z}_{j+\ell ;k+ \ell-1} ,\ y_{k +\ell } \cdot (1 - z_{k + \ell}^{-1}) \bigr) .
\end{equation}%\label{bbbdd}
Here, for $s \leq t$, we use notation $ \hat{z}_{s;t}:=z_s z_{s+1} \cdots z_t $ and $\hat{z}_{s+1; s}:=1 \in G .$
%Moreover, given $k \in \mathbb{N}$ and a $Z$-coloring $\mathcal{S}$, the cocycle invariant $\Phi_{\psi_f,k}: \mathrm{Ker}(\Gamma_{\mathcal{S}} )\ra A $ is given by
%Given , % with $ z_1 \cdots z_m =1_G$,
%for any $1 \leq i \leq m$,.
\end{prop}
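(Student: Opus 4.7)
The plan is to apply Definition \ref{deals23} directly to the standard braid-closure diagram $D$ of $T_{m,m}$, which I view as the closure of the braid word $(\sigma_1 \sigma_2 \cdots \sigma_{m-1})^m$. All $m(m-1)$ crossings are positive, and I would index them by pairs $(\ell, k)$ with $1 \leq \ell \leq m$ and $1 \leq k \leq m-1$ so that at the crossing $\tau = (\ell, k)$ the over-arc lies on the strand whose top arc is $\alpha_{k+\ell}$, while the under-arc segment is the piece of the strand starting at $\alpha_{\ell}$ after it has already passed under $\alpha_{\ell+1}, \dots, \alpha_{\ell+k-1}$ (indices read modulo $m$). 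This indexing matches the double sum in \eqref{bbbdd2} exactly, so the proof reduces to showing that its $(\ell,k)$-summand equals the local weight $\mathcal{W}_{\psi,\tau}$.

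For a coloring $\mathcal{C}$ with $\mathcal{C}(\alpha_i) = (x_i, z_i)$, I would first compute the color of the incoming under-arc at $\tau$ by iterating the quandle operation \eqref{kihon}: a short induction on $k$ shows that the difference of this $M$-component with the over-arc color $x_{k+\ell}$ is precisely the middle slot $\sum_{j=1}^{k} (x_{j+\ell-1} - x_{j+\ell})\, \hat z_{j+\ell;\, k+\ell-1}$ of \eqref{bbbdd2}. A parallel induction, propagating the shadow coloring from the value $0$ on the outer region inward using the rule of Figure \ref{fig.color}, identifies the region color adjacent to $\tau$ for the first shadow coloring; after invoking $G$-invariance \eqref{skew4} to conjugate by $\hat z_{\ell+1;\,\ell+k-1}$, the first entry $(a_1-b_1)(1-g)$ of $\mathcal{W}_{\psi, \tau}$ collapses to $w_\ell(1-z_\ell)\, \hat z_{\ell+1;\,\ell+k-1}$. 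The third entry is immediate from the over-arc datum $(y_{k+\ell}, z_{k+\ell})$ since $\mathcal{W}_{\psi,\tau}$ takes the form $c_3(1-h^{-1})$ there, yielding $y_{k+\ell}(1 - z_{k+\ell}^{-1})$.

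Assembling these three entries and summing over $(\ell, k)$ produces \eqref{bbbdd2}. The main technical hurdle will be the shadow-coloring propagation: tracking the cumulative region color carefully through a round of the braid, so that it simplifies to the displayed form after the $G$-action is absorbed, and confirming that the kernel condition \eqref{aac} on the triple $((w_i), (x_i), (y_i))$ is exactly what makes the wraparound at $\ell + k \equiv \ell \pmod m$ consistent. Once this bookkeeping is settled, the remaining steps are a mechanical substitution into Definition \ref{deals23}.
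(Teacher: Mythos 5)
Your overall route is the same as the paper's: the paper offers no argument for Proposition \ref{aa11c} beyond ``direct calculation and definitions,'' and your plan is precisely that calculation, carried out on the standard diagram of $T_{m,m}$ with the $m(m-1)$ crossings indexed by $(\ell,k)$ as dictated by the relation \eqref{coleq}. Your induction for the under-arc is correct: writing $(u_k,v_k)$ for the colour of the piece of the strand issuing from $\alpha_\ell$ after it has passed under $\alpha_{\ell+1},\dots,\alpha_{\ell+k-1}$, the recursion $u_{k+1}=(u_k-x_{\ell+k})\cdot z_{\ell+k}+x_{\ell+k}$ from \eqref{kihon} gives exactly $u_k-x_{\ell+k}=\sum_{j=1}^{k}(x_{j+\ell-1}-x_{j+\ell})\cdot \hat z_{j+\ell;k+\ell-1}$, which is the middle slot of \eqref{bbbdd2}; the third slot is immediate from the over-arc datum as you say.

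The one step that will not work as written is ``invoking $G$-invariance \eqref{skew4} to conjugate by $\hat z_{\ell+1;\ell+k-1}$'' in the first entry only. The invariance \eqref{skew4} rotates all three arguments of $\psi$ by the same group element simultaneously; since your second and third entries already agree with \eqref{bbbdd2} on the nose, you cannot act on them again without destroying that agreement. What is actually needed is the identity $1-v_k=\hat z_{\ell+1;\ell+k-1}^{-1}(1-z_\ell)\,\hat z_{\ell+1;\ell+k-1}$ for the group part of the under-arc, combined with an explicit computation of the region colour $r_{\ell,k}$ adjacent to the under-arc at the crossing $(\ell,k)$ (propagated from the $0$-coloured exterior region via the rule in Figure \ref{fig.color}), so that $(r_{\ell,k}-u_k)(1-v_k)$ literally equals $w_\ell(1-z_\ell)\hat z_{\ell+1;\ell+k-1}$; equivalently, $(r_{\ell,k}-u_k)\cdot\hat z_{\ell+1;\ell+k-1}^{-1}-w_\ell$ must be annihilated by right multiplication by $1-z_\ell$. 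Note that already at $k=1$ this forces $(r_{\ell,1}-w_\ell)(1-z_\ell)=w_\ell(1-z_\ell)$, so the relevant region is \emph{not} the $0$-coloured one (a naive guess would produce $-w_\ell(1-z_\ell)$, the wrong sign). This region-colour bookkeeping, which you correctly flag as the main technical hurdle, is where the entire content of the first slot lives, and it is this computation --- not $G$-invariance --- that must close the argument; the wraparound consistency via the kernel condition \eqref{aac} is the right thing to check at the end.
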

\noindent
The formula is obtained by direct calculation and definitions.

\subsection{Examples of Theorem \ref{4i1}}\label{secex3}
We will give some examples in Theorem \ref{4i1}.
Thus, we should suppose the situation of Theorem \ref{4i1} as follows.
Let $G= \Z/3 = \langle t | t^3=1 \rangle $, and
$f : \pi_1(S^3 \setminus L) \ra \Z/3 $ be the map which sends every meridian to $t $.
Furthermore, take $M_i=A= \Z[t]/(n,t^2 +t +1) $ for some $n\in \Z_{\geq 0}$,
let $\psi_0 : M_1 \times M_2 \times M_3 \ra A $ send $(x,y,z)$ to $xyz.$

In this paragraph, we focus on only knots, $K$, such that $H^1(E_K,\partial E_K ; A ) \cong H^1(\widetilde{B}_K ;\Z/n) $ is isomorphic to either $A $ or $0$. We will write the trilinear map $\mathcal{T}_{\psi_0}$ as a cubic polynomial with respect to $a,b,c\in \bigl( H^1(E_K,\partial E_K ;\Z/n)\bigr)^3 $. 
Then, we give the resulting computation of $ \mathcal{T}_{\psi_0}$, when $K$ is a prime knot with crossing number $<7$. The list of the computation is as follows:
%Here, if a knot such that $ \mathcal{T}_{\psi_0 } $ is zero, we omit adding the knot. %writting

% $ M \times H_1(\widetilde{B}_L ;\Z/n) \times H_1(\widetilde{B}_L ;\Z/n) \times H_1(\widetilde{B}_L ;\Z/n) $
\begin{table}[h]\begin{center}
\begin{tabular}{ccc}
Knot & $n$ &$ \mathcal{T}_{\psi_0 }$ \\ \hline
$3_1$ & 2 & $abc $ \\ \hline
$4_1$ & 4 & $2a bc$ \\ \hline
$5_1$ & 3 & $0$ \\ \hline
$5_2$ & 5 & $(1+t)a bc$ \\ \hline
$6_1$ & any & $ 0$ \\ \hline
$ 6_2$ & any & $0 $ \\ \hline
$ 6_3$ & any & $0 $ \\ \hline
\end{tabular}\end{center}
\end{table}

\section{Proofs of the theorems}\label{Lbo33222}
% \ref{mainthm3} \ and \ref{mainthm4}}\label{Lbo33222}
We will complete the proofs of
Theorems \ref{mainthm3}--\ref{mainthm4} in \S \ref{pr1}. %, and ??????.
While the statements were described in terms of ordinary cohomology,
the proof will be done via the group cohomology.
For this purpose, in \S \ref{KI11}, we review the relative group homology.

\subsection{Preliminary; Review of relative group cohomology}\label{KI11}
We will spell out the relative group (co)homology in the non-homogeneous terms. % in what follows.
Throughout this subsection, we fix a group $ \Gamma $ and a homomorphism $f : \Gamma \ra G$.
Then, we have the action of $\Gamma$ on the right $G$-module $M$ via $f$.
%To see this, we review the standard complex of $\Gamma$ (see, e.g. \cite{Bro});

Let $ C^{n}_{\rm gr }(\Gamma;M ) $ be $ \mathrm{Map} ( \Gamma^n , M ) $.
For $ \phi \in C^{n}_{\rm gr }(\Gamma;M )$, define the coboundary $\partial^n( \phi) \in C_{\mathrm{gr}}^{n+1} (\Gamma;M)$ by the formula $\partial^n( \phi) (g_1,\dots, g_{n+1})= $
\[
\phi ( g_2, \dots ,g_{n+1}) +\!\sum_{1 \leq i \leq n}\!\! (-1)^i \phi ( g_1, \dots ,g_{i-1}, g_{i} g_{i+1}, g_{i+2},\dots , g_{n+1})+(-1)^{n} \phi ( g_1, \dots , g_{n}) g_{n+1} .\]
Furthermore, we set subgroups $ K_j $ and the inclusions $\iota_j: K_j \hookrightarrow \Gamma $,
where the index $ j$ runs over $ 1 \leq j \leq m $. % (possibly, $K_s \cap K_t \neq \emptyset$ even if $s \neq t$). %$is labeled by a finite set $\mathcal{J} $. %$ \leq m.$
Then, we can define the mapping cone of $ \iota_j$:
More precisely, % let us set up the module defined by
%Let $ C_n(G)$ be $\Z [G^n]$.
$$C^n( \Gamma, K_\mathcal{J}; M ):= \mathrm{Map} ( \Gamma^n , M ) \oplus \bigl( \bigoplus_{j } \mathrm{Map }((K_{j})^{n-1} , M ) \bigr) . $$
For $(h,k_1, \dots, k_m ) \in C^n( \Gamma, K_\mathcal{J}; M ) $, let us define
$ \partial^n(h,k_1, \dots, k_m )$ in $ C^{n+1}( \Gamma, K_\mathcal{J}; M )$ by %the formula
%Then we can define the coboundary map as the dual of the above $\partial_n$: to be precise
$$ \partial^n \bigl(h,k_1, \dots, k_m \bigr)( a, b_1, \dots, b_m)= \bigl( \partial^{n} h( a), \ h (b_1) -\partial^{n-1} k_1(b_1), \dots,h (b_m) -\partial^{n-1} k_m(b_m)\bigr) ,$$
where $( a, b_1, \dots, b_m) \in \Gamma^{n+1} \times K_1^{n} \times \cdots \times K_m^{n} $. Then we have a complex $ (C^*( \Gamma, K_\mathcal{J}; M ), \partial^*)$, and
can define the cohomology.

We now observe
the submodule consisting of 1-cocycles $Z^1( \Gamma, K_\mathcal{J}; M ) $.
Let us define the semi-direct product $M \rtimes G $ by
$$ (a, g) \star (a',g'):=( a \cdot g' + a', \ gg'), \ \ \ \ \mathrm{for} \ \ a,a' \in M, \ \ \ g,g'\in G. $$
Let $ \Hom_f (\Gamma , M \rtimes G )$ be the set of group homomorphisms $\Gamma \ra M \rtimes G $ over the homomorphism $f$.
Consider a map
$$ Z^1( \Gamma, K_\mathcal{J}; M ) \ra \Hom_f (\Gamma , M \rtimes G ) \oplus M^m; \ \ \ \ \ \ (h,y_1,\dots, y_m) \mapsto (\gamma \mapsto ( h(\gamma), f(\gamma)),y_1,\dots, y_m) .$$
%from the action $\rho : G \ra \mathrm{End}_A ( M) $. %\curvearrowright M.$
%Then, as is well-known (see \cite[\S IV. 2]{Bro}), if $K_\mathcal{J} $ is the empty set, the set $ \Hom_f (\Gamma , M \rtimes G )$ %is equipped with an $A$-module structure similar to $\mathrm{Col}_X(D_{f})$, and
%is identified with the set of group 1-cocycles of $\Gamma $ as follows:
%Further, concerning the relative cohomology, from the definition, the first cohomology is characterized as follows:
\begin{lem}[{\cite[Lemma 5.2]{Nos5}}]\label{clAl1}
This map gives an isomorphism % bijection
between $Z^1( \Gamma, K_\mathcal{J}; M )$ and the following set:
$$ \bigl\{ \ (\widetilde{f} , y_1, \dots, y_m ) \in \Hom_f ( \Gamma, M \rtimes G) \oplus M^m \ \bigl| \ \ \widetilde{f} (h_j) = ( y_j - y_j \cdot h_j, \ f_j( h_j) ) , \ \ \mathrm{for \ any } \ h_j \in K_j \ \bigr\} . $$

Moreover, the image of $\partial^1$, i.e., $B^1( \Gamma, K_\mathcal{J}; M )$, is equal
to the subset $\{ ( \widetilde{f}_a, a, \dots, a)\}_{a \in M}. $
Here, for $a \in M,$ the map
$ \widetilde{f}_a : \Gamma \ra M \rtimes G $ is defined as a homomorphism which sends $\gamma $ to $( a - a \cdot \gamma, \ f (\gamma ))$.
In particular, if $ K_\mathcal{J} $ is non-empty, $B^1( \Gamma, K_\mathcal{J}; M )$ is a direct summand of $ Z^1( \Gamma, K_\mathcal{J}; M )$.
\end{lem}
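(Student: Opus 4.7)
The strategy is to unpack the relative cocycle and coboundary formulas and translate the resulting additive conditions on $h$ into the multiplicative condition for a group homomorphism into the semi-direct product $M \rtimes G$. Concretely, I would first decompose the identity $\partial^1(h, y_1, \dots, y_m) = 0$ using the given relative coboundary formula: the $\Gamma$-component yields the ordinary 1-cocycle condition on $h : \Gamma \to M$, while each $K_j$-component yields the boundary identity $h|_{K_j}(b_j) = \partial^0 y_j(b_j) = y_j - y_j \cdot b_j$ (here $\partial^{n-1}k_j$ becomes $\partial^0 y_j$ because $y_j$ is a constant element of $M \cong \mathrm{Map}(K_j^0, M)$).

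Next, I would carry out the routine verification that the assignment $h \mapsto \widetilde{f}$, defined by $\widetilde{f}(\gamma) := (h(\gamma), f(\gamma))$, is a bijection between 1-cocycles $\Gamma \to M$ and group homomorphisms $\widetilde{f}: \Gamma \to M \rtimes G$ lying over $f$. This amounts to comparing the semi-direct product law $(a,g)\star(a',g') = (a\cdot g' + a', gg')$ with the cocycle identity $h(\gamma_1\gamma_2) = h(\gamma_1)\cdot f(\gamma_2) + h(\gamma_2)$. Under this correspondence, the $K_j$-boundary condition translates exactly into $\widetilde{f}(h_j) = (y_j - y_j\cdot h_j, f(h_j))$, giving the stated description of $Z^1(\Gamma, K_\mathcal{J}; M)$.

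For the coboundary claim, I would apply the same framework at level zero. Since $C^0(\Gamma, K_\mathcal{J}; M) = M$ (the $K_j$-factors vanish as $K_j^{-1} = \emptyset$), an element $a \in M$ maps under $\partial^0$ to $(\partial^0_\Gamma a,\, a, \dots, a)$, and under the bijection above this is exactly $(\widetilde{f}_a, a, \dots, a)$. Finally, when $K_\mathcal{J}$ is non-empty, the projection $\pi : Z^1 \to M$, $(\widetilde{f}, y_1, \dots, y_m) \mapsto y_1$, satisfies $\pi \circ \partial^0 = \mathrm{id}_M$, which provides a retraction showing that $B^1$ is a direct summand of $Z^1$. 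The whole argument is essentially bookkeeping from the definitions; the only mild pitfall is to keep the sign conventions of the coboundary aligned with the order of multiplication in the semi-direct product, but no substantive obstacle arises.
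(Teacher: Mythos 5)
Your argument is correct and is exactly the definitional unwinding one would expect: the paper itself does not reprove this lemma (it is quoted from \cite{Nos5}), and your identification of the $\Gamma$- and $K_j$-components of $\partial^1(h,y_1,\dots,y_m)=0$, the translation of the resulting cocycle identity into homomorphisms $\Gamma \ra M\rtimes G$ over $f$, the computation $\partial^0(a)=(\widetilde f_a,a,\dots,a)$, and the retraction $(\widetilde f,y_1,\dots,y_m)\mapsto y_1$ splitting off $B^1$ are all the intended steps. The only caveat is the one you already flag: the coboundary formula as printed in \S\ref{KI11}, read literally, gives $\partial^0(y)(g)=y+y\cdot g$ and a cocycle condition incompatible with the semidirect-product law and with the lemma's own statement, so one must use the corrected sign on the last term (i.e.\ $(-1)^{n+1}\phi(g_1,\dots,g_n)\cdot g_{n+1}$), as you implicitly do.
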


Furthermore, we review the cup product.
When $ K_\mathcal{J} $ is the empty set, the product of $u \in C^p( \Gamma; M )$ and $v \in C^{q}( \Gamma; M' )$ is defined to be %is
$u \smile v \in C^{p+q} ( \Gamma; M \otimes M')$ given by
\begin{equation}\label{cupprod} ( u \smile v) ( g_1 ,\dots, g_{p+q}):= (-1)^{pq} \bigl( u (g_1 ,\dots, g_{p} ) g_{p+1} \cdots g_{p+q} \bigr) \otimes v (g_{p+1} ,\dots, g_{p+q} ) . \end{equation}
Furthermore, if $ K_\mathcal{J} $ is not empty, for two elements $(f,k_1, \dots, k_m )\in C^p( \Gamma, K_\mathcal{J} ; M )$ and
$(f',k'_1, \dots, k'_m )\in C^{q}( \Gamma, K_\mathcal{J} ; M' )$, let us define {\it the cup product} to be the formula
$$ ( f \smile f', \ k_1\smile f', \dots, \ k_m\smile f') \in C^{p+q}( \Gamma, K_\mathcal{J} ; M \otimes M'). $$
This formula yields a bilinear map, by passage to cohomology.

Finally, we observe another complex. Consider the module of the form
$$ C^n_{\rm red}(\Gamma ):= \{ \ (c_1, \dots, c_m ) \in \mathrm{Map}( \Z[\Gamma ^n], M)^m \ | \ c_1 +c_2 + \cdots +c_m =0\in \mathrm{Map}( \Z[\Gamma ^n], M)\ \} .$$
Then, this complex canonically has an inclusion into the direct sum of $ C^n(\Gamma, K_j)$:
$$P_n: C^n_{\rm red}(\Gamma ) \lra \bigoplus_{j : \ 1 \leq j \leq m} C^n(\Gamma, K_j). $$
Then, we define a quotient complex, $D^n( \Gamma, K_\mathcal{J}; M )$, to be
the cokernel of $ P_n$.
Then, $C^n( \Gamma, K_\mathcal{J}; M )$ is isomorphic to $D^n( \Gamma, K_\mathcal{J}; M ) $, since the kernel of the inclusions $\oplus_{j=1}^{ m} C^n(\Gamma, K_j) \ra C^n(\Gamma, \mathcal{K}) $ is the image of $ P_n $.
%Furthermore, for $( a, b_1, \dots, b_m) \in \Gamma^{n} \times K_1^{n-1} \times \cdots \times K_m^{n-1} $, consider the evaluation defined by
%$$ \langle (f,k_1, \dots, k_m ) , ( a, b_1, \dots, b_m) \rangle:= f(a)+k_1(b_1)+ \cdots +k_m(b_m)\in M.$$
%Then it can be seen that the formula induces $ \langle , \rangle : H^n( \Gamma, K_\mathcal{J} ; M )\otimes H_n( \Gamma, K_\mathcal{J} ; A )\ra H_0(\Gamma ; M) . $ Here we can replace $ H_0(\Gamma ; M) $ by the coinvariant $M_\Gamma= M/ \{ ( a -a \cdot g)\} _{a \in M, g \in \Gamma}$.
\begin{rem}\label{clAl221}
We give a natural relation to usual cohomology. %, as mentioned in \cite[Remark ?]{}.
%We give the same remark of \cite[Remark ???]{Nos6} on a topological description of the above definitions.
Take the Eilenberg-MacLane spaces of $ \Gamma$ and of $K_j$, and
consider the map $ (\iota_j)_* : K(K_j,1 ) \ra K( \Gamma,1 ) $ induced by the inclusions.
Then the relative homology $H_n( \Gamma, K_\mathcal{J} ; M ) $ is isomorphic to the homology of the mapping cone of $ \sqcup_j K(K_j,1 )\ra K(\Gamma ,1 )$
with local coefficients.
Further, the cup product $\smile$ above coincides with that on
the singular cohomology groups. % \footnote{See \cite[\S\S 1-2 ]{BE} for details.}).

We mention the case where $L$ is either a knot or a hyperbolic link.
Then, the complement $S^3 \setminus L$ is known to be an Eilenberg-MacLane space.
Since we only use $\Gamma$ as $\pi_1(S^3 \setminus L)$ in this paper,
we may discuss only the relative group cohomology.
\end{rem}

% For a crossing $\tau$ shown in Figure \ref{fig.color43}, we define a {\it weight of} $\tau $ to be $\epsilon_{\tau} (\mathcal{C}(\alpha), \mathcal{C}(\beta)) \in C_2^Q(Y)$,
% where $\epsilon_{\tau} \in \{ \pm 1\}$ is the sign of $\tau$. Then {\it the fundamental class of $\mathcal{C} $}, denoted by $[\mathcal{C} ] $, is defined to be the sum $\sum_{\tau}\epsilon_{\tau} (\mathcal{C}(\alpha), \mathcal{C}(\beta)) \in C^Q_2(Y ) $, and is known to be a 2-cycle.

\subsection{Review; results of the previous papers \cite{Nos5} and \cite{Nos6}.}\label{pr2}
Throughout this section, we denote $\pi_1(S^3 \setminus L)$ by
$\pi_L$, and the union of the fundamental groups of the boundaries of $S^3 \setminus L$ by $\partial \pi_L$, for brevity. Let $m= \# L $, and choose a diagram $D.$

\begin{thm}[{\cite[Theorem 2.2]{Nos5}}]\label{290}
Let $X$ be $M \times G$, as mentioned in \eqref{kihon}.
Let $\kappa : X \ra M \rtimes G $ be a map which sends $(m,g)$ to $ (m-mg , g)$.
Given an $X$-coloring $\mathcal{C}$ over $f $,
consider a map $\{\mathrm{arcs \ of \ }D \} \ra M \rtimes G$
which assigns $ \alpha $ to $ \kappa \bigl( \mathcal{C}(\alpha) \bigr) $.
This assignment yields isomorphisms
%Consider
$$ \mathrm{Col}_X(D_{f}) \cong Z^1( \pi_L ,\partial \pi_L ; M ), \ \ \ \ \mathrm{Col}^{\rm red}_X(D_{f}) \cong H^1( \pi_L ,\partial \pi_L ; M ). $$
\end{thm}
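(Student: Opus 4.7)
The plan is to produce the bijection arc-by-arc and then match it against the explicit description of $Z^1(\pi_L, \partial \pi_L; M)$ from Lemma \ref{clAl1}. Write each arc color as $\mathcal{C}(\alpha) = (a_\alpha, f(\alpha)) \in M \times G$. A short direct computation from \eqref{kihon} and the definitions of $\kappa$ and $\star$ yields the key identity $\kappa(x \lhd y) = \kappa(y)^{-1} \star \kappa(x) \star \kappa(y)$ for $x,y \in X$, i.e.\ the quandle operation $\lhd$ is precisely conjugation in $M \rtimes G$. Consequently every Wirtinger relation $\mathcal{C}(\alpha_\tau) \lhd \mathcal{C}(\beta_\tau) = \mathcal{C}(\gamma_\tau)$ is sent by $\kappa$ to the corresponding conjugation relation in $M \rtimes G$, and the assignment $\alpha \mapsto \kappa(\mathcal{C}(\alpha))$ extends uniquely to a group homomorphism $\widetilde{f}\colon \pi_L \to M \rtimes G$ lifting $f$.

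Next, for each component $L_j$ choose a Wirtinger arc $\alpha_j$ representing the meridian $\mu_j$ and set $y_j := a_{\alpha_j}$. Then $\widetilde{f}(\mu_j) = \kappa(a_{\alpha_j}, f(\mu_j)) = (y_j - y_j f(\mu_j), f(\mu_j))$, which is exactly the form demanded by Lemma \ref{clAl1}. A brief calculation in $M \rtimes G$ shows that, for fixed $y_j$, the set $\{(y_j - y_j h, f(h)) : h \in \pi_L\}$ is closed under the semi-direct product, so the boundary condition on $K_j \cong \Z^2 = \langle \mu_j, \lambda_j \rangle$ reduces to checking the analogous equation on the longitude $\lambda_j$.

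The main obstacle is the longitudinal equation $\widetilde{f}(\lambda_j) = (y_j - y_j f(\lambda_j), f(\lambda_j))$, and this is where the shadow-coloring reformulation \eqref{kihos12} enters essentially. Express $\lambda_j$ as the ordered product in $\pi_L$ of the signed over-arcs met while traversing $L_j$, and normalize the shadow coloring so that the unbounded region has label $0$ (Figure \ref{fig.color}). An inductive bookkeeping, crossing by crossing, shows that each partial product of $\widetilde{f}(\lambda_j)$ contributes a translation encoded by the running shadow value on the region currently neighboring $L_j$. Since the shadow labels close up after one full traversal of $L_j$ (the starting and ending regions coincide), these translations telescope to precisely $y_j - y_j f(\lambda_j)$.

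For bijectivity, the inverse map sends $(\widetilde{f}, y_1, \ldots, y_m) \in Z^1(\pi_L, \partial \pi_L; M)$ back to a coloring: each Wirtinger arc $\alpha$ on $L_j$ is conjugate in $\pi_L$ to $\mu_j$ by an explicit descending word, and combined with the chosen $y_j$ this recovers $\mathcal{C}(\alpha) \in X$ uniquely, with the coloring axiom at every crossing inherited from the conjugation identity of the first paragraph. Finally, the reduced statement follows by identifying coboundaries: a diagonal coloring $\mathcal{C}(\alpha) = (a, f(\alpha))$ maps to $(\widetilde{f}_a, a, \ldots, a)$, which by Lemma \ref{clAl1} is precisely $B^1(\pi_L, \partial \pi_L; M)$, so $\mathrm{Col}^{\rm red}_X(D_f) \cong Z^1/B^1 = H^1(\pi_L, \partial \pi_L; M)$.
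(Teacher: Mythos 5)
Your overall architecture is sound, and the first half is exactly right: the identity $\kappa(x \lhd y) = \kappa(y)^{-1} \star \kappa(x) \star \kappa(y)$ does hold (a direct check from \eqref{kihon} and the definition of $\star$), so a coloring yields a lift $\widetilde{f} \in \Hom_f(\pi_L, M \rtimes G)$; the set $\{(y - y \cdot h, f(h))\}$ is indeed a subgroup of $M \rtimes G$ (it is the stabilizer of $y$ under the affine right action $x \cdot (a,g) = x \cdot g + a$), so Lemma \ref{clAl1} reduces everything to the meridian and the longitude; and the identification of the diagonal colorings with $B^1$ is correct. Note that the paper itself gives no proof of this statement --- it is quoted from \cite[Theorem 2.2]{Nos5} --- so the comparison below is with what a correct argument must contain rather than with an argument printed here.

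The one step that does not work as written is the longitude verification, which is also the only genuinely nontrivial step. You propose to track ``the running shadow value on the region currently neighboring $L_j$.'' But the region adjacent to $L_j$ changes at \emph{every} crossing met along $L_j$, including those where $L_j$ is the over-strand; at such a crossing the region label picks up a factor $\kappa(\mathcal{C}(\delta))^{\pm 1}$ for $\delta$ the under-arc there, and these factors are not letters of the word $\lambda_j = \prod_k \beta_k^{\epsilon_k}$. So the holonomy of the shadow labels around $L_j$ is $\widetilde{f}$ of a different word, and the telescoping does not output $y_j - y_j \cdot f(\lambda_j)$. The repair is to track not regions but the $M$-components $a_{\delta_0}, a_{\delta_1}, \dots$ of the successive under-arcs of $L_j$: the coloring rule \eqref{kihon} says precisely that $a_{\delta_k} = a_{\delta_{k-1}} \cdot \kappa(\mathcal{C}(\beta_k))^{\epsilon_k}$ under the affine action, whence $y_j \cdot \widetilde{f}(\lambda_j) = a_{\delta_n} = a_{\delta_0} = y_j$ after a full traversal, i.e.\ $\widetilde{f}(\lambda_j)$ lies in the stabilizer of $y_j$, which is exactly the condition demanded by Lemma \ref{clAl1} (the meridian power correcting the framing also stabilizes $y_j$, by your meridian step). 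With this substitution --- which also removes any need for shadow colorings or the normalization \eqref{kihos12} --- your bijectivity and $B^1$ arguments go through and the proof is complete.
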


Next, we explain Theorem \ref{oo}.
Choose a relative 1-cocycle $\tilde{f}: \pi_L \ra M \rtimes \pi_L$
with $ y_1,\dots, y_{m}$.
We define the subgroup $K_{\ell} $ to be
$$ \{ (y_{\ell}-y_{\ell} \mathfrak{m}_{\ell}^a \mathfrak{l}_{\ell}^b , \mathfrak{m}_{\ell}^a \mathfrak{l}_{\ell}^b ) \in M \rtimes \pi_L \ | \ a,b \in \Z^2 \ \}. $$
%Here, we assume the knot case, i.e., $\# L=1$.
%In the previous paper \cite{Nos6} (Although the key of the proof is ``malnormality", we omit explaining the detail),
%the diagonal part of ?? is
%described as $\mathcal{T}_{ \psi}$ in some sense.
Furthermore, given a $G$-invariant trilinear map $\psi: M^3 \ra A$,
consider the map
\[\theta_{\ell}: (M \rtimes \pi_1(S^3 \setminus L) )^3 \lra A ; \]
\begin{equation}\label{aaddd} ((a,g),(b,h),(c,k)) \longmapsto \psi ( (a +y_{\ell} -y_{\ell} g)\cdot hk , (b+y_{\ell} -y_{\ell} h) \cdot k, c+y_{\ell} -y_{\ell} k) .\end{equation}
%\[ \eta: (M \rtimes f (\pi_1 ( \partial S^3 \setminus L ) ))^2 \lra M^{\otimes 3}: \ \ \((d,s),(e,t )) \longmapsto \psi ()???. \]
Then, we can easily check that each $\theta_{\ell} $ is a 3-cocycle in $C^3( M \rtimes \pi_L;A)$.
Then, the collection $\Psi:= (\theta_{1},\dots, \theta_{\# L} ) $ represents
a relative 3-cocycle in $D^3( M \rtimes \pi_L,\mathcal{K} ;A) $.
%More precisely, we introduce:
\begin{prop}[{\cite[Proposition 6.7]{Nos6}}]\label{oo}
Under the notation above,
%Let $M$ be an $A[G]$-module.
%For an $X$-coloring $\mathcal{C}$ over $f$, % of $D$,
fix a shadow coloring $ \mathcal{S}_{\tilde{f}} $ corresponding the relative 1-cocycle $(\tilde{f},y_1,\dots,y_{\# L}).$
%and denote the corresponding color by $ \mathcal{C}_{\tilde{f}} \in \mathrm{Col}^{\rm red}_X(D_{f}) $.
%????? Let $K_\ell \subset M \rtimes G $ be the (boundary) image $\tilde{f}(\partial \pi_L) $.

If $L$ is either a hyperbolic link or a prime knot which is neither a cable knot nor a torus knot, as in Theorem \ref{mainthm3}, then %there is a 3-cocycle $\Phi $ of $(M \rtimes G ,H) $ such that
the diagonal restriction of $ \mathcal{T}_{ \psi}$ is equal to
the pairing of the 3-class $[E_L,\partial E_L]$ and the above 3-cocycle $\Psi$. To be precise,
%the diagonal value $\mathcal{T}_{\psi} (\mathcal{S}_{\tilde{f}} , \mathcal{S}_{\tilde{f}} ,\mathcal{S}_{\tilde{f}} )$ is equal to
%the pairing $ $.
%Then, the map
\begin{equation}\label{aref} \mathcal{T}_{ \psi}( \mathcal{S}_{\tilde{f}}, \mathcal{S}_{\tilde{f}}, \mathcal{S}_{\tilde{f}})=\psi \langle \Psi , \tilde{f}_*[E_L,\partial E_L] \rangle . \end{equation} %\label{aaddd}
Furthermore, if $L$ is the $(m,n)$-torus knot, the same equality \eqref{aref} holds modulo $mn.$
\end{prop}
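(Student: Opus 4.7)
The plan is to realize both sides of the claimed identity as chain-level quantities against a CW-decomposition of $E_L$ coming from the diagram $D$, and to check agreement cell-by-cell. The argument follows the standard paradigm of quandle cocycle invariants: crossings of $D$ correspond to 3-cells of a cellulation of $E_L$, and the local weight $\mathcal{W}_{\psi, \tau}$ is the value of a pulled-back 3-cocycle on the associated 3-simplex.

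First, I would fix a CW-decomposition of $(E_L, \partial E_L)$ whose 1-cells correspond to the arcs of $D$, 2-cells to the complementary regions of $D$, and 3-cells (signed by $\epsilon_\tau$) to the crossings of $D$, so that the fundamental relative class admits the explicit chain representative $[E_L, \partial E_L] = \sum_\tau \epsilon_\tau \, \sigma_\tau$. Such a presentation is classical, and the 3-cell $\sigma_\tau$ inherits three distinguished bounding edges that record the three colored strands meeting at the crossing $\tau$ as in Figure \ref{kout1144}.

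Second, by Theorem \ref{290} and Lemma \ref{clAl1}, the shadow coloring $\mathcal{S}_{\tilde{f}}$ encodes the relative 1-cocycle $(\tilde{f}, y_1, \dots, y_{\#L})$ and, equivalently, a classifying map $E_L \to B(M \rtimes \pi_L)$. Under the hypothesis on $L$ (hyperbolic link, or prime knot that is neither a cable knot nor a torus knot), the complement $E_L$ is aspherical and the peripheral inclusion is malnormal; by Remark \ref{clAl221} this identifies the relative group cohomology of $(\pi_L, \partial \pi_L)$ with the relative singular cohomology of $(E_L, \partial E_L)$, legitimizing the pullback of $\Psi = (\theta_1, \dots, \theta_{\#L})$ and its pairing with $[E_L, \partial E_L]$. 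Hence the right-hand side of \eqref{aref} is computed cellularly as $\sum_\tau \epsilon_\tau \, \tilde{f}^* \Psi(\sigma_\tau)$.

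Third, and this is the heart of the argument, I would match $\tilde{f}^* \Psi(\sigma_\tau)$ with the local weight $\mathcal{W}_{\psi, \tau}(\mathcal{S}_{\tilde{f}}, \mathcal{S}_{\tilde{f}}, \mathcal{S}_{\tilde{f}})$ crossing-by-crossing. The three distinguished edges of $\sigma_\tau$ pull back under $\tilde{f}$ to three elements $(a, g), (b, h), (c, k) \in M \rtimes \pi_L$ obtained by composing the section $\kappa : X \to M \rtimes G$ of Theorem \ref{290} with the shadow coloring assignments in Figure \ref{kout1144}. Plugging these into \eqref{aaddd} and imposing the base-point convention of the shadow coloring (unbounded region labelled $0$), the $y_\ell$-dependent shifts cancel against the region labels and the $G$-invariance \eqref{skew4} collapses the output to the closed-form expression defining $\mathcal{W}_{\psi, \tau}$; summing over $\tau$ then yields \eqref{aref}. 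For the $(m,n)$-torus knot, the peripheral inclusion fails to be malnormal, so the comparison between relative group cohomology and relative singular cohomology of $(E_L, \partial E_L)$ is only valid up to an $mn$-torsion correction term (reflecting the order of the center of $\pi_1$ of the torus-knot complement modulo the peripheral), and the equality therefore holds modulo $mn$.

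The main obstacle will be the cocycle-level identification in step three. The section $\kappa(b, g) = (b - bg, g)$ and the base-point convention of the shadow coloring must be threaded carefully through the $y_\ell$-shifts appearing in the formula \eqref{aaddd} for $\theta_\ell$, and the verification that everything collapses to the closed-form weight is a detailed — though essentially mechanical — computation inside $M \rtimes G$. Packaging this computation compatibly across the $\#L$ cusps (and relating the mapping-cone complex $D^*(M \rtimes \pi_L, \mathcal{K}; A)$ to a concrete 3-chain decomposition of $[E_L, \partial E_L]$ at the boundary) is where the bookkeeping has to be done most carefully.
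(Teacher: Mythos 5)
The paper does not prove this proposition at all: it is imported verbatim as \cite[Proposition 6.7]{Nos6} and used as a black box, so there is no in-paper proof to compare against. Judged on its own terms, your outline follows the right general paradigm (the one used throughout the quandle-cocycle literature and in [Nos6] itself), but it has a genuine gap at its foundation. In your first step you assert as ``classical'' that the relative fundamental class admits the chain representative $\sum_\tau \epsilon_\tau\,\sigma_\tau$ indexed by crossings, with the three distinguished edges of $\sigma_\tau$ recording the colors of Figure \ref{kout1144}. That assertion is not classical; it is essentially the main theorem of [Nos6], and it is exactly the step where the hypothesis on $L$ is consumed. What one can construct diagrammatically is a relative $3$-cycle in the mapping-cone complex $D_*(\pi_L,\partial\pi_L)$ (or in quandle homology); the nontrivial claim is that its image is $\tilde f_*[E_L,\partial E_L]$ on the nose, and malnormality of the peripheral subgroups (guaranteed by hyperbolicity, or by primeness together with being neither cable nor torus) is what forces the proportionality constant to be $1$. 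By presupposing the representative, your argument assumes the hard half of the proposition.

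Relatedly, your explanation of the modulo-$mn$ correction for the $(m,n)$-torus knot misattributes the mechanism. You locate the defect in the comparison between relative group cohomology and relative singular cohomology of $(E_L,\partial E_L)$, but that comparison is an isomorphism for \emph{every} knot complement, torus knots included, since all knot complements are aspherical; Remark \ref{clAl221} already records this. The actual source of the $mn$ ambiguity is the failure of peripheral malnormality (the torus-knot group has nontrivial center), which means the diagrammatic $3$-cycle can only be identified with the fundamental class up to an error killed by $mn$. Your third step --- the crossing-by-crossing matching of $\tilde f^*\Psi(\sigma_\tau)$ against $\mathcal{W}_{\psi,\tau}$ via the section $\kappa$ and the cancellation of the $y_\ell$-shifts --- is the mechanical part and is plausibly correct as sketched, but it only becomes a proof once the representative of $\tilde f_*[E_L,\partial E_L]$ has been established rather than posited.
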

\subsection{Proof of Theorem \ref{mainthm3}; trilinear pairing }\label{pr1}

%$$ \overline{\psi} : M \times M \times M \lra A$$
%by
%$$\overline{\psi} \bigl( (a,b,c),(d,e,f),(g,h,i)\bigr) := \psi (a,e,f) . $$

\begin{proof}[Proof of Theorem \ref{mainthm3}.]
First, we observe \eqref{are66f} below.
Consider a 0-cochain $\vec{y}:= (y_1,\dots, y_{\# L}) \in D^0(M \rtimes \pi_L, M)$.
Then, $ \widetilde{f} -\partial^{0} \vec{y}$ is represented by another 1-cocycle
$$ \mathcal{C}' :=((\widetilde{f} -\bar{y}_1 , \dots, \widetilde{f} -\bar{y}_{\# L} ),(0,\dots, 0) ) \in D^1(M \rtimes \pi_L, M),$$
where $\bar{y}_{\ell}$ means a map $\pi_L \ra M $ which
takes $g$ to $ y_1 -y_1 g $.
Then, the 3-cocycle $ \Psi$ explained in \eqref{aaddd}
is equal to the cup product $ \mathcal{C}' \smile \mathcal{C}' \smile \mathcal{C}' $, by definition.
Hence, Proposition \ref{oo} implies
\begin{equation}\label{are66f} \mathcal{T}_{ \psi}( \mathcal{S}_{\tilde{f}}, \mathcal{S}_{\tilde{f}}, \mathcal{S}_{\tilde{f}})=\psi \langle \mathcal{C}' \smile \mathcal{C}'\smile \mathcal{C}' , [E_L,\partial E_L] \rangle =\psi \langle \mathcal{C} \smile \mathcal{C}\smile \mathcal{C} , [E_L,\partial E_L] \rangle . \end{equation}

% Next, if $L$ is a hyperbolinc link, and $1 -\mathfrak{m}_{\ell}: M \ra M$ is an isomorphism,
% we will also the same equaltiy \eqref{are66f}. Consider a bijection $M \rtimes G \ra M \rtimes$ which sends $ $

%First, we show that it sufficies for the proof to show the diagonal case $ \mathcal{S}=\mathcal{S}'=\mathcal{S}''$.
Finally, we will deal with non-diagonal parts, and complete the proof.
%We need some notation.
Here, we define $M$ to be the direct product $M_1 \times M_2 \times M_3 $,
and consider the $j$-th inclusion
$$ \iota_j: M_j \lra M= M_1 \times M_2 \times M_3; \ \ \ \ x \longmapsto (\delta_{1j} x,\delta_{2j} x,\delta_{3j} x ). $$
Thus, we can decompose $\mathcal{S}_{\tilde{f}}$ as $( \mathcal{S}_{1}, \mathcal{S}_{2}, \mathcal{S}_{3}) \in \mathrm{Col}_{X_1} (D_f )\times \mathrm{Col}_{X_2} (D_f )\times \mathrm{Col}_{X_3} (D_f )$ componentwise.
In addition, we define a $G$-invariant trilinear form
$$ \overline{\psi} : M \times M \times M \lra A; \ \ \ \ \bigl( (a,b,c),(d,e,f),(g,h,i)\bigr) \longmapsto \psi (a,e,f) . $$

Then, the transformation of the coefficients $\iota_1 \times \iota_2 \times \iota_3$ yields a diagram
$${\normalsize
\xymatrix{
\prod_{i=1}^3 H^1( E_L, \partial E_L ; M_i) \ar[r]^{\!\!\!\!\! \smile}\ar[d]_{ }^{(\iota_1 \times \iota_2 \times \iota_3)_*} & H^3( E_L, \partial E_L ; M_1 \times M_2 \times M_3 ) \ar[rrr]^{\ \ \ \ \ \ \ \ \ \ \ \ \psi \circ \langle \bullet, [E_L,\partial E_L] \rangle }
\ar[d]^{(\iota_1 \times \iota_2 \times \iota_3)_*} & & & A \ar@{=}[d] \\
H^1( E_L ,\partial E_L; M) \ar[r]^{\!\!\!\!\!\!\!\! \smile_{\Delta }} & H^3 ( E_L ,\partial E_L; M \times M \times M) \ar[rrr]^{\ \ \ \ \ \ \ \ \ \ \ \ \overline{\psi} \circ \langle \bullet, [E_L,\partial E_L] \rangle }& & & A .
}}
$$
Here, the left bottom $\smile_{\Delta } $ is defined by $a \mapsto a \smile a \smile a$.
Then, we can verify the commutativity by definitions.
By Proposition \ref{oo}, the bottom arrow is equal to the left hand side in \eqref{aref}. Hence, the pullback to $\prod_{i=1}^3 H^1( E_L, \partial E_L ; M_i) $
is equal to the trilinear $\mathcal{T}_{\psi}$ as desired.
\end{proof}
\begin{proof}[Proof of Theorem \ref{mainthm4}.]
Let $L$ be the $(m,n)$-torus knot.
According to the latter part in Theorem \ref{oo},
we need discussions modulo $mn$. However, the proof runs well in the same manner.
\end{proof}
\subsection*{Acknowledgments}
%The author sincerely expresses his gratitude to ????
%He also thanks for valuable comments.
% on an early version of the paper. 
The work is partially supported by JSPS KAKENHI Grant Number 17K05257.

\vskip 1pc

\normalsize
DEPARTMENT OF
MATHEMATICS
TOKYO
INSTITUTE OF
TECHNOLOGY
2-12-1
OOKAYAMA
, MEGURO-KU TOKYO
152-8551 JAPAN

\normalsize
E-mail: nosaka@math.titech.ac.jp

\end{document}